\newtheorem{teo}{Theorem}
\newtheorem{lema}{Lemma}
\newtheorem{rem}{Remark}
\newtheorem{cor}{Corollary}
\newtheorem{definition}{Definition}
\newtheorem{proposition}{Proposition}
\title[Nonlocal equations with deforming kernels]{Cordes-Nirenberg type results for nonlocal equations with deforming kernels}
\author[D. dos Prazeres]{Disson dos Prazeres}
\address{Universidade Federal de Sergipe, Brazil}{}
\email{disson@mat.ufs.br}
\author[A. Sobral]{Aelson Sobral}
\address{Department of Mathematics, Universidade Federal da Pa\-ra\'\i \-ba, 58059-900 Jo\~ao Pessoa, PB-Brazil}{}
\email{aelson.sobral@academico.ufpb.br}
\author[J.M.~Urbano]{Jos\'{e} Miguel Urbano}
\address{King Abdullah University of Science and Technology (KAUST), Computer, Electrical and Mathematical Sciences and Engineering Division (CEMSE), Thuwal 23955-6900, Saudi Arabia and University of Coimbra, CMUC, Department of Mathematics, 3001-501 Coimbra, Portugal}{} 
\email{miguel.urbano@kaust.edu.sa}
\begin{document}

\subjclass[2010]{Primary 35B65. Secondary 45H05, 35J96, 35J60, 35D40}

%
%
%
%
%

\keywords{Integro-differential equations; nonlocal elliptic equations; deforming kernels; Monge-Amp\`ere equation}

\begin{abstract}
We derive Cordes-Nirenberg type results for nonlocal elliptic integro-dif\-ferential equations with deforming kernels comparable to sections of a convex solution of a Monge-Amp\`ere equation. Under a  natural integrability assumption on the Monge-Amp\`ere solution, we prove a stability lemma allowing the ellipticity class to vary. Using a compactness method, we then derive H\"older regularity estimates for the gradient of the solutions.
\end{abstract}

\date{\today}

\maketitle

\section{Introduction}

The fully nonlinear elliptic integro-differential Isaacs equation 
\begin{equation}\label{MAINEQ}
  \mathcal{I}[u(x),x] : = \inf_{\alpha} \sup_{\beta}  L_{\alpha \beta} u(x) = f (x), \quad x \in \Omega,
\end{equation}
where
$$L_{\alpha \beta} u(x) : =  \int_{\mathbb{R}^n} \left( u(x+y) + u(x-y) - 2u(x) \right) \mathcal{K}_{\alpha, \beta} (x,y) dy,$$
appears naturally in competitive stochastic games when two players are allowed to choose from different strategies at every step in order to maximize the expected value $u(x)$ at the first exit point of the domain $\Omega \subset \mathbb{R}^n$. For arbitrary index sets $\mathcal{A}$ and $\mathcal{B}$, the kernels $\{ \mathcal{K}_{\alpha,\beta} (x,y) \}_{\alpha \in \mathcal{A},\beta \in \mathcal{B}}$ are nonnegative functions measuring the frequency of jumps in the $y$ direction at the point $x$. Further applications and motivations related to nonlocal equations can be found, for example, in \cite{bisci2016,bucur2016,nezza2012}.

The large applicability of this type of models reinforces the relevance of seeking qualitative properties for their solutions. In this paper, we obtain Cordes-Nirenberg type results for solutions of \eqref{MAINEQ}, with $f \in L^\infty (\Omega)$. We argue through an approximation method, associating (\ref{MAINEQ}) to a constant coefficients equation. We are particularly interested in the case where the level sets of the kernels $\mathcal{K}_{\alpha,\beta}$ are comparable to sections of a convex solution $\phi$ of a Monge-Amp\`ere equation, as in \cite{caffarelli2018fully}. Setting $v_\phi:\mathbb{R}^n \rightarrow \mathbb{R}$ as
$$
    v_\phi(y) := \phi(y) - \phi(0) - D\phi(0) \cdot y,
$$
we note that, due to convexity, $v_\phi \geq 0$, and define the sections $S_r^\phi$ of $\phi$ as 
$$S_r^\phi := \left\{  v_\phi < r^2 \right\}.$$

We will study equation (\ref{MAINEQ}) for kernels of the particular form
\begin{equation}\label{KERNELSOFTHEEQ}
   \mathcal{K}_{\alpha,\beta} (x,y) = (2-\sigma) \frac{b_{\alpha,\beta} (x,y)}{{v_\phi }^{\frac{n+\sigma}{2}}(y)}, \quad \mbox{with} \quad b_{\alpha,\beta}:\Omega \times \mathbb{R}^n \rightarrow \left[ {\Lambda}^{-1},\Lambda \right] 
\end{equation}
for constants $\Lambda \geq 1 $ and $\sigma \in (1,2)$. Our main result states that if
\begin{equation}\label{CLOSENESS}
    \sup_{\substack{\alpha \in \mathcal{A}, \beta \in \mathcal{B} \\ (x,y) \in \Omega \times \mathbb{R}^n}} \left|b_{\alpha,\beta}(x,y) - b(y) \right| < \eta,
\end{equation}
for a small enough $\eta$ and $b \in L^\infty (\mathbb{R}^n)$, then solutions of (\ref{MAINEQ}) are of class $\mathcal{C}^{1,\alpha_0}_{loc}(\Omega)$, where $\alpha_0$ is to be specified further. 

The regularity theory for fully nonlinear nonlocal elliptic equations was initiated by Caffarelli and Silvestre in their seminal paper \cite{caffarelli2009regularity}. They treat the case corresponding to $\phi = |.|^2$ and a translation-invariant operator, and develop a nonlocal version of the  ABP estimate, establish a Harnack inequality and derive H\"older regularity estimates. Additional regularity assumptions on the kernels lead to H\"older gradient regularity through a standard cut-off and integration by parts technique. This theory was the starting point for many other developments in the nonlocal setting. For example, an anisotropic scenario was considered  in \cite{CLU,SL} and the case of a general convex $\phi \in \mathcal{C}^2(\mathbb{R}^n)$ satisfying
\begin{equation}\label{MONGEAMPEREEQ}
     \gamma \leq \det(D^2 \phi) \leq \Gamma  \quad \mbox{in $\mathbb{R}^n$},
\end{equation}
for $0<\gamma \leq \Gamma<\infty$, was the object of \cite{caffarelli2018fully}. In this case, since the sections of $\phi$ are comparable to ellipsoids (see Fritz John's lemma, in \cite{gutierrez2000geometric}), the kernels can be very degenerate and improved estimates that take into account the deformation of the sections of $\phi$, which is driven by the Monge-Amp\`ere equation, are required. 

As in the local case, a natural extension is to develop an approximation theory that allows for the inclusion of coefficients in the operator. In general, dealing with $x-$dependant equations can be rather challenging since the dependence could break off the effect of the operator. The case $\phi = |.|^2$ was again dealt with by Caffarelli and Silvestre, who were able to prove in \cite{caffarelli2011regularity} that if two nonlocal operators are close in an appropriate sense, then the solutions of the corresponding equations have the same regularity. To prove H\"older gradient regularity for the solutions, however, the kernels of the constant coefficient operators needed to have a fair amount of regularity, a restriction that would be removed by Kriventsov in \cite{kriventsov2013c} by means of an approximation argument. Putting it bluntly, our paper is to \cite{caffarelli2018fully} what \cite{caffarelli2011regularity} was to \cite{caffarelli2009regularity}. Alternative regularity results are also available in different scenarios and we steer the interested reader to, \textit{e.g.}, \cite{BL,CJ,jin2016schauder,S,serra2015c}.

The heart of the matter in proving gradient H\"older regularity in \cite{caffarelli2011regularity} is to  understand what kind of equation functions of the form
\begin{equation}\label{SCALING1ALFA}
     w_\lambda (x) =  \frac{1}{\lambda^{1+\alpha}}[u-l](\lambda x)
\end{equation}
solve, for an affine function $l$, and capture their behaviour as $\lambda$ approaches zero. The first issue is to somehow force a fair scaling behaviour of the operator, so that (\ref{SCALING1ALFA}) still satisfies an elliptic equation in the same class as that of the equation satisfied by $u$. The second issue is purely nonlocal: in order to apply compactness arguments, one needs to extend previous H\"older regularity results for solutions with certain growth at infinity since this behaviour is always expected. Therefore, finding suitable weights that control the growth of the solutions is crucial. The weights must be comparable to the kernels at infinity, so that we can assure the operator is well-defined at the solutions. In \cite{caffarelli2011regularity}, since the order of the equation is greater than $1$, the weights have the form
$$
   \mathcal{W}(y):= \frac{1}{1 + |y|^{n+\sigma}},
$$
and so the solutions could have growth comparable to $\zeta_{\alpha_c} = |.|^{1+\alpha_c}$, for any $\alpha_c$ satisfying $\alpha_c  < \sigma - 1 $. This is due to the fact that
$$
    \left\| \zeta_{\alpha_c} \right\|_{L^1(\mathbb{R}^n,\mathcal{W})} := \int_{\mathbb{R}^n}|y|^{1+\alpha_c}\mathcal{W}(y)dy < \infty.
$$

The main novelty in our problem, with a general convex $\phi \in \mathcal{C}^2(\mathbb{R}^n)$ satisfying (\ref{MONGEAMPEREEQ}), is that  the setting changes in both issues described above. Foremost, scalings of the form (\ref{SCALING1ALFA}) still satisfy an elliptic equation, but in a different class as the original one. More precisely, we need to scale the kernels (\ref{CLOSENESS}) as
$$
    \frac{b_{\alpha, \beta}(\lambda-,\lambda-)}{v_{\phi_{\lambda}}^{\frac{n+\sigma}{2}}}, \quad \mbox{where} \ \phi_\lambda = \lambda^{-2}\phi(\lambda-),
$$
and it is crucial that, although the ellipticity class changes, the $\phi_\lambda$ still satisfy (\ref{MONGEAMPEREEQ}), with the same bounds. Secondly, since growth at infinity is unavoidable, one needs to beware the choice of the weights. The reasonable alternative is to choose
$$
    \mathcal{W}_{\phi}(y) := \frac{1}{1 + v_{\phi}(y)^{\frac{n+\sigma}{2}}},
$$
since it controls the behaviour of (\ref{KERNELSOFTHEEQ}) at infinity, and so, for some $\alpha_1< \sigma-1$, we need
\begin{equation}\label{INTCONDFORPHI}
    \left\|\zeta_{\alpha_1}\right\|_{L^1(\mathbb{R}^n,\mathcal{W}_\phi)} < \infty.
\end{equation}
If scalings were not in use, the basic setting would be then settled to develop the theory. However, because scaling is imperative in our case, we need to choose scaled weights $\mathcal{W}_{\phi_\lambda}$ and find an exponent $\alpha_\lambda < \sigma - 1$ such that
\begin{equation}\label{INTCONDFORPHILAMBDA}
    \left\|\zeta_{\alpha_\lambda}\right\|_{L^1(\mathbb{R}^n,\mathcal{W}_{\phi_\lambda})} < \infty.
\end{equation}  
We stress that all these ingredients need to be uniform in $\lambda$, and so a cautious analysis needs to be performed so that estimates do not degenerate as $\lambda$ goes to zero.

Unlike in \cite{caffarelli2011regularity}, a regularizing effect on the kernels of the equation occurs through the scaling procedure. More precisely, the following convergence holds true (see proposition \ref{LOCALLYELLIP}), locally uniformly in $\mathbb{R}^n$: 
$$
v_{\phi_\lambda}(y) \xrightarrow{\ \lambda \rightarrow 0 \ }  D^2\phi(0)y \cdot y.
$$
Furthermore, if condition (\ref{INTCONDFORPHI}) is true, then (\ref{INTCONDFORPHILAMBDA}) becomes stable and we are able to prove a stability result that allows the kernels to vary (see Lemma \ref{RA5}). Now, the fact that the family $\{\phi_\lambda \}_{\lambda \in (0,1]}$ satisfies (\ref{MONGEAMPEREEQ}) becomes convenient again, since it makes H\"older estimates from \cite{caffarelli2018fully} uniform in the parameter $\lambda$. Therefore, we make way to import the regularity theory valid for a nice equation with kernels comparable, up to a rotation, to that of the fractional Laplacian, and we may use Kriventsov's  results from \cite{kriventsov2013c}. We prove, as a consequence, $\mathcal{C}^{1,\alpha_0}$ regularity estimates for solutions of (\ref{MAINEQ}) with kernels satisfying only (\ref{KERNELSOFTHEEQ}), (\ref{CLOSENESS}) and (\ref{INTCONDFORPHI}), for
$$
    \alpha_0 < \min\{\alpha_1,\alpha_*\},
$$
where $\alpha_*$ is the exponent obtained in \cite{kriventsov2013c} and $\alpha_1$ is from (\ref{INTCONDFORPHI}). In particular, if our equation does not have $x-$dependence, we slightly improve  the gradient regularity from \cite{caffarelli2018fully} since we demand only (\ref{INTCONDFORPHI}) for the kernels (this is automatically true in the setting $\phi = |.|^2$).

Moreover, as an additional technical difficult, we do not assume any sort of symmetry on the kernels. The aftermath is that many computations in the paper, such as in Proposition \ref{HIP24} and Lemma \ref{RA5}, 
need to be cautiously done. Condition (\ref{INTCONDFORPHI}) is once again instrumental in assuring the estimates are steady with respect to the parameter $\lambda$. It is also important to mention that all of our estimates are uniform with respect to the parameter $\sigma$, which satisfies $\sigma \geq \sigma_0 > 1 + \alpha_1$, with $\alpha_1$ from (\ref{INTCONDFORPHI}).

The paper is organized as follows: in section $\ref{SECTION2}$, after some preliminaries, we introduce the definitions to be used in the remainder of the paper and our main assumptions. In section $\ref{SECTION3}$, we present the stability result and the approximation lemma. By applying an iteration procedure, we derive, in section $\ref{SECTION4}$, the gradient regularity estimates for solutions of $(\ref{MAINEQ})$.   

\section{Preliminaries, definitions and main assumptions}\label{SECTION2}

We gather in this section some definitions and auxiliary results, in addition to stating our main assumption. We start with a simple lemma that will be instrumental in the sequel.

\begin{lema}\label{PREM1}
Let $\Psi_1,\Psi_2 \in \mathcal{C}^2(\mathbb{R}^n)$ be two convex functions such that
$$
 \lambda_1 < det(D^2 \Psi_1), \quad \lambda_2 > det(D^2 \Psi_2),
$$
for $0 < \lambda_1, \lambda_2 < \infty$ and define $v_{\Psi_i}(z) := \Psi_i(z) - \Psi_i(0) - D \Psi_i(0) \cdot z$ for $i = 1,2$. For every $r>0$, there exists a constant $C=C(r,\Psi_1,\Psi_2) $ such that
$$
v_{\Psi_1}(z)  \leq C v_{\Psi_2}(z), \quad \forall z \in \overline{B}_r.
$$
\end{lema}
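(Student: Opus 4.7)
The plan is to sandwich both $v_{\Psi_1}$ and $v_{\Psi_2}$ between constant multiples of $|z|^2$ on the compact set $\overline{B}_r$: an upper bound of the form $v_{\Psi_1}(z) \le M_1 |z|^2$ and a matching lower bound $v_{\Psi_2}(z) \ge c\,|z|^2$, so that the conclusion follows immediately with $C = M_1/c$. The upper half rests on just the $\mathcal{C}^2$ regularity of $\Psi_1$: since $v_{\Psi_1}(0) = 0$ and $\nabla v_{\Psi_1}(0) = 0$, Taylor's formula with integral remainder gives
$$
v_{\Psi_1}(z) = \int_0^1 (1-t)\, z^{T} D^2\Psi_1(tz)\, z \, dt \le \tfrac{1}{2}\|D^2\Psi_1\|_{L^\infty(\overline{B}_r)}\, |z|^2,
$$
throughout $\overline{B}_r$, with $M_1 = M_1(r,\Psi_1) < \infty$ by continuity of $D^2\Psi_1$ on the compact set $\overline{B}_r$.

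For the lower bound I would combine the Monge-Amp\`ere hypothesis with convexity to force $D^2\Psi_2(0)$ to be strictly positive definite: convexity of $\Psi_2$ yields $D^2\Psi_2(0) \ge 0$ as a symmetric matrix, and the fact that $\det(D^2\Psi_2(0))$ is strictly positive (the non-degeneracy extracted from the determinant hypothesis at the origin) rules out any vanishing eigenvalue, so $D^2\Psi_2(0) \ge 2\mu I$ for some $\mu > 0$. Continuity of $D^2\Psi_2$ then produces $\delta \in (0,r]$ with $D^2\Psi_2(z) \ge \mu I$ for $z \in \overline{B}_\delta$, and Taylor's formula exactly as above gives $v_{\Psi_2}(z) \ge \tfrac{\mu}{2}\,|z|^2$ on $\overline{B}_\delta$.

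To push this estimate across the annulus $\overline{B}_r \setminus B_\delta$ I would invoke convexity of the one-variable map $t \mapsto v_{\Psi_2}(t z/|z|)$, which vanishes at $t=0$: for $z$ with $\delta \le |z| \le r$, the secant-line characterization of convex functions (above the chord outside an interval) gives
$$
v_{\Psi_2}(z) \ge \tfrac{|z|}{\delta}\, v_{\Psi_2}\!\left(\delta\, \tfrac{z}{|z|}\right) \ge \tfrac{\mu\delta}{2}\,|z| \ge \tfrac{\mu\delta}{2r}\,|z|^2,
$$
yielding $v_{\Psi_2}(z) \ge c\,|z|^2$ on $\overline{B}_r$ with $c = c(r, \Psi_2) > 0$. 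Setting $C := M_1/c$ closes the argument. The main obstacle is precisely the quadratic lower bound on $v_{\Psi_2}$: the Monge-Amp\`ere hypothesis is invoked only at the single point $z = 0$ to secure non-degeneracy of the Hessian, while the remainder is a Taylor expansion in $\overline{B}_\delta$ together with a convexity extension along rays in the surrounding annulus.
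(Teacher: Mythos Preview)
Your proof is correct and takes a genuinely different route from the paper. You sandwich both functions between multiples of $|z|^2$: the upper bound $v_{\Psi_1}(z)\le M_1|z|^2$ via Taylor's formula on $\overline{B}_r$, and the lower bound $v_{\Psi_2}(z)\ge c|z|^2$ obtained first on a small ball $\overline{B}_\delta$ (by continuity of $D^2\Psi_2$ at $0$) and then extended to the annulus by one-dimensional convexity along rays. The paper proceeds more directly: it sets
\[
C=\max_{z\in\overline{B}_r}\frac{\lambda_{\max}(D^2\Psi_1(z))}{\lambda_{\min}(D^2\Psi_2(z))}
\]
and observes that $f:=v_{\Psi_1}-Cv_{\Psi_2}$ satisfies $D^2f\le 0$ on $\overline{B}_r$ by this choice of $C$, so $f$ is concave there; since $f(0)=0$ and $Df(0)=0$, concavity forces $f\le 0$ at once. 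Thus the paper's argument is a single pointwise Hessian comparison yielding an explicit constant, whereas your decomposition separates the two ingredients (boundedness of $D^2\Psi_1$ and non-degeneracy of $D^2\Psi_2$) and only invokes the latter at the origin, covering the rest of $\overline{B}_r$ by convexity alone. One minor caveat: the strict positivity $\det(D^2\Psi_2(0))>0$ that you invoke is not literally furnished by the stated hypothesis $\lambda_2>\det(D^2\Psi_2)$, which is an \emph{upper} bound; the paper tacitly makes the same assumption when it defines $C$ (it needs $\lambda_{\min}(D^2\Psi_2)>0$ on $\overline{B}_r$), and in every application of the lemma in the paper $\Psi_2$ is indeed strictly convex.
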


\begin{proof}
Given $r>0$, consider the $C^2$ auxiliary function $f: \mathbb{R}^n \rightarrow \mathbb{R}$ defined by
 $$
   f(z):= v_{\Psi_1}(z) - Cv_{\Psi_2}(z),
 $$
 where 
 $$
    C = \max_{z \in \overline{B}_r} \left\{\frac{\lambda_{max}(D^2 \Psi_1 (z))}{\lambda_{min}(D^2 \Psi_2(z))} \right\} \geq \left(\frac{\lambda_1}{\lambda_2} \right)^{1/n}.
$$
Then
 $$
 \begin{array}{lll}
    D^2 f(z)  & = & D^2 v_{\Psi_1}(z) - C D^2 v_{\Psi_2}(z) \\
     & & \\
      & = & D^2 \Psi_1(z) - C D^2 \Psi_2(z)\\
      & & \\
      & \leq & (\lambda_{max}(D^2 \Psi_1 (z)) - C\lambda_{min}(D^2 \Psi_2(z)))I_d \leq 0
 \end{array}
 $$
so $f$ is a concave function in $\overline{B}_r$ and, therefore, it stays below  any tangent hyperplane. In particular,
$$
    f(z) \leq f(0) + Df(0)\cdot z = 0, \quad \forall z \in \overline{B}_r.
$$
\end{proof}
We next define the appropriate notion of solution to our problem. 

\begin{definition}[Viscosity solution]\label{VS}
Let $f$ be a bounded and continuous function in $\mathbb{R}^n$. We say a function $u:\mathbb{R}^n \rightarrow \mathbb{R}$, upper semicontinuous in $\overline{\Omega}$,  is a viscosity subsolution  in $\Omega$ of the equation 
$$\mathcal{I}[u(x),x]= f,$$
and we write $\mathcal{I}[u(x),x] \geq f$, if, whenever $x_0 \in \Omega$, $N \subset \Omega$ is a neighbourhood of $x_0$, and $\varphi \in C^2(\overline{N})$ satisfies
$$
\varphi(x_0) = u(x_0) \qquad \textrm{and} \qquad \varphi(y) > u(y), \quad \forall y \in N\backslash\{x_0\},
$$
then, if we choose as test function $\tau$ such that
\[\tau:= \left\{ \begin{array}{lcl}
\varphi & \mbox{in} & N\\
u & \mbox{in} & \mathbb{R}^n \backslash N,
\end{array} \right. \]
we have $\mathcal{I}[\tau(x),x] (x_0) \geq f(x_0)$.

A supersolution is defined analogously and a solution is a function that is both a viscosity subsolution and a viscosity supersolution.
\end{definition}

The definition of the extremal operators is basically the same as is \cite{caffarelli2018fully}. These operators are important to define the uniform ellipticity condition with respect to some class $\mathcal{L}$. Here, $\mathcal{L}$ is a set of linear operators $L$ of the form
$$
    L[u](x) = \int_{\mathbb{R}^n}(u(x+y) + u(x-y) -2u(x))\mathcal{K}(y)dy,
$$
for some kernel $\mathcal{K}$. 

We say that a nonlocal operator $\mathcal{I}$ is elliptic with respect to some class $\mathcal{L}$ if
\begin{equation}
	M^-_{\mathcal{L}}[w](x) \leq \mathcal{I}[(u+w)(x),x] (x) - \mathcal{I}[u(x),x] (x) \leq M^+_{\mathcal{L}}[w](x),
\end{equation}
where
$$
	M^-_{\mathcal{L}}[w](x):= \inf_{L \in \mathcal{L}}L[w](x), \quad M^+_{\mathcal{L}}[w](x):= \sup_{L \in \mathcal{L}}L[w](x).
$$
In \cite{caffarelli2018fully}, in order to obtain the ABP estimate, a Harnack inequality and H\"older regularity results, the kernels needed to be in the class $\mathcal{L}^0_\phi(\sigma)$, which is the class of linear operators with kernels $\mathcal{K}$ satisfying
\begin{equation}\label{CLASSOFKJMU}
	(2-\sigma)\frac{1/\Lambda }{v_\phi(y)^{\frac{n+\sigma}{2} }}\leq \mathcal{K}(y) \leq (2-\sigma)\frac{\Lambda}{v_\phi(y)^{\frac{n+\sigma}{2}}},
\end{equation}
for some $\Lambda\geq 1$. It is known that for the class defined by $(\ref{CLASSOFKJMU})$ the extremal operators have the simple form
$$
	M^-_{\mathcal{L}^0_\phi(\sigma)}[u](x) = (2-\sigma)\int_{\mathbb{R}^n}\frac{\frac{1}{\Lambda} \delta^+(u,x,y) - \Lambda \delta^-(u,x,y)}{v_\phi(y)^{\frac{n+\sigma}{2}}}dy
$$
and
$$
		M^+_{\mathcal{L}^0_\phi(\sigma)}[u](x) = (2-\sigma)\int_{\mathbb{R}^n}\frac{\Lambda \delta^+(u,x,y) - \frac{1}{\Lambda} \delta^-(u,x,y)}{v_\phi(y)^{\frac{n+\sigma}{2}}}dy,
$$
where $\delta(u,x,y) = u(x+y) + u(x-y) - 2u(x)$, $\delta^{+}(u,x,y)$ denotes its positive part and $\delta^{-}(u,x,y)$ its negative part. We would like to point out that if $\gamma=\Gamma = 1$ in equation $(\ref{MONGEAMPEREEQ})$ then Pogorelov's result (a proof can be found in \cite[Theorem 4.18]{figalli2017monge}) states that $\phi$ is a quadratic polynomial. Then $v_\phi(y) = Ay \cdot y$, for a matrix $A$ satisfying $\det(A) = 1$. Equations with kernels in this form were studied in \cite{CC} in a different setting related to a nonlocal version of the Monge-Amp\`ere equation. The interested reader may also appreciate \cite{caffarelli2016nonlocal}, for a different approach, and the more recent contribution in  \cite{CSC}.

The requirement that $u \in L^1(\mathbb{R}^n,\mathcal{W})$, for some weight $\mathcal{W}$, allows $u$ to have a certain growth at infinity. For example, in the classical case of \cite{caffarelli2011regularity}, for operators with kernels satisfying the inequality $(\ref{CLASSOFKJMU})$ with $\phi = |.|^2$, the natural choice would be
$$
\mathcal{W}(y) = \frac{1}{1 + |y|^{n+\sigma_0}},
$$
simply because this weight controls the tails of the kernels at infinity, and so the integrals are not singular for $\sigma \geq \sigma_0$.\par
Therefore, a natural choice for the weight in our setting is
\begin{equation}\label{WEIGHT}
\mathcal{W}_\phi(y) = \frac{1}{1 + v_\phi(y)^{\frac{n+\sigma_0}{2}}},
\end{equation}
in order to bound the tails of the kernels $\mathcal{K}$ satisfiying $(\ref{CLASSOFKJMU})$. \par
The weight $\mathcal{W}_{\phi}$ satisfies three properties that will play an important role in our analysis. We next state and prove them.

\begin{proposition}\label{HIP23}
	Assume $\mathcal{K}$ is a function that satisfies
	$$
	    \mathcal{K}(y) \leq \Lambda\frac{(2-\sigma)}{v_\phi(y)^{\frac{n+\sigma}{2}}}.
	$$
	Then for each $r>0$, there exists a constant $C_r$ such that
	$$
	\mathcal{K}(y) \leq C_r\mathcal{W}_\phi(y), \quad \mbox{for} \quad y \not \in S_r^\phi.
	$$
\end{proposition}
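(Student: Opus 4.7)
The plan is to bound the ratio $\mathcal{K}(y)/\mathcal{W}_\phi(y)$ directly, exploiting the hypothesis $\sigma\ge\sigma_0$. Using the assumed upper bound for $\mathcal{K}$ and the explicit form of $\mathcal{W}_\phi$ in \eqref{WEIGHT}, one gets
$$
\frac{\mathcal{K}(y)}{\mathcal{W}_\phi(y)} \,\le\, \Lambda(2-\sigma)\,\frac{1+v_\phi(y)^{\frac{n+\sigma_0}{2}}}{v_\phi(y)^{\frac{n+\sigma}{2}}},
$$
so the task reduces to bounding the scalar function
$$
g(t) \,:=\, \frac{1+t^{\frac{n+\sigma_0}{2}}}{t^{\frac{n+\sigma}{2}}} \,=\, t^{-\frac{n+\sigma}{2}} + t^{\frac{\sigma_0-\sigma}{2}}
$$
on the interval $t\ge r^2$, since for $y\notin S_r^\phi$ we have $v_\phi(y)\ge r^2$ by definition of the section.

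Next I would observe that, because $\sigma\ge\sigma_0$, both exponents $-(n+\sigma)/2$ and $(\sigma_0-\sigma)/2$ are non-positive, so $g$ is monotone non-increasing on $(0,\infty)$. Consequently,
$$
\sup_{t\ge r^2} g(t) \,=\, g(r^2) \,=\, r^{-(n+\sigma)}+r^{\sigma_0-\sigma}.
$$
For $\sigma$ ranging in $[\sigma_0,2]$, this last expression is dominated by a constant depending only on $r$, $n$, and $\sigma_0$ (for instance $2\max\{r^{-(n+2)},r^{\sigma_0-2},1\}$ handles both cases $r\le 1$ and $r\ge 1$). Multiplying by $\Lambda(2-\sigma)\le 2\Lambda$ yields the desired $C_r=C_r(n,\sigma_0,\Lambda,r)$, which in particular gives uniformity in $\sigma\in[\sigma_0,2]$ as advertised in the introduction.

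There is essentially no serious obstacle here: the proof is a one-variable calculus estimate. The only point to be careful about is the sign of $\sigma_0-\sigma$ (which is why the hypothesis $\sigma\ge\sigma_0$ is used) and the dichotomy $r\le 1$ vs.\ $r\ge 1$ when extracting an explicit constant, so that the resulting $C_r$ does not blow up as $\sigma\to 2^-$. Everything else follows from the monotonicity of $g$ and the fact that $t=v_\phi(y)$ is bounded below away from zero outside $S_r^\phi$.
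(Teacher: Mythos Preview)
Your proof is correct and follows essentially the same route as the paper: both rewrite $\mathcal{K}(y)/\mathcal{W}_\phi(y)$ as $\Lambda(2-\sigma)\bigl(v_\phi(y)^{-\frac{n+\sigma}{2}}+v_\phi(y)^{\frac{\sigma_0-\sigma}{2}}\bigr)$ and then use $v_\phi(y)\ge r^2$ together with $\sigma\ge\sigma_0$ to bound each term. Your explicit phrasing via the monotone function $g(t)$ and the inclusion of the constant $1$ in the max to cover the case $r\ge 1$ are minor clarifications, but the argument is the same one-variable estimate.
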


\begin{proof}
Notice that for a kernel $\mathcal{K}$ satisfying the assumed bound, we have
$$
	\begin{array}{lll}
		\mathcal{K} (y) &\leq&\displaystyle \Lambda\frac{(2-\sigma)}{v_\phi(y)^{\frac{n+\sigma}{2}}}\\
		& & \\
		& = &\displaystyle \Lambda (2-\sigma)\frac{1 + v_\phi(y)^{\frac{n + \sigma_0}{2}}}{v_\phi(y)^{\frac{n+\sigma}{2}}}\mathcal{W}_\phi(y)\\
		& & \\
		& = & \displaystyle \Lambda (2-\sigma)\left(\frac{1}{v_\phi(y)^{\frac{n+\sigma}{2}}} + v_\phi(y)^{\frac{\sigma_0 - \sigma}{2}} \right)\mathcal{W}_\phi(y)\\
		& & \\
		& \leq &\displaystyle \Lambda (2-\sigma_0)\left(\frac{1}{r^{n+\sigma}} + r^{\sigma_0 - \sigma}\right)\mathcal{W}_\phi(y)\\
		& &\\
		& \leq & 2 \Lambda (2-\sigma_0) \max\left\{ \frac{1}{r^{n+2}},\frac{1}{r^{2-\sigma_0}} \right\}\mathcal{W}_\phi(y)\\
		& & \\
		& = & C_r \mathcal{W}_{\phi}(y),
	\end{array}
$$
for every $y \not \in S_r^\phi$, with $C_r = C(\Lambda, \sigma_0,r,n)$.   
\end{proof}

\begin{proposition}\label{HIP25}
The weight $\mathcal{W}_\phi$ is not singular,\textit{ i.e.}, for each $r>0$, there exists a constant $C_r$ such that
	$$
	\sup_{z \in B_r(y)}\mathcal{W}_\phi(z) \leq C_r\mathcal{W}_\phi(y).
	$$
\end{proposition}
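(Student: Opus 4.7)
The plan is to reduce the statement to a uniform lower bound
$$
v_\phi(z) \geq c_r \, v_\phi(y) - C_r', \qquad z \in B_r(y), \ y \in \mathbb{R}^n,
$$
with positive constants depending only on $r$, $n$, $\sigma_0$ and $\phi$. This is equivalent to the claim after elementary manipulations on $1+v_\phi^{(n+\sigma_0)/2}$ (using that $1+a^\alpha$ and $(1+a)^\alpha$ are comparable for $\alpha>0$). I would split the argument in two regimes, separated by a threshold $T$ on $v_\phi(y)$ to be chosen.

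In the \emph{bounded regime} $v_\phi(y)\leq T$, the estimate is immediate: since $\mathcal{W}_\phi\leq 1$ everywhere while $\mathcal{W}_\phi(y)\geq (1+T^{(n+\sigma_0)/2})^{-1}$, the conclusion holds with $C_r = 1+T^{(n+\sigma_0)/2}$. By continuity of $v_\phi$ on compact sets, one can choose $T$ so that this covers any prescribed Euclidean ball $\overline B_R$, and hence assume $|y|>R$ (with $0\notin B_r(y)$) in the complementary regime.

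In that complementary regime, convexity of $v_\phi$ gives
$$
v_\phi(z)\geq v_\phi(y)+Dv_\phi(y)\cdot(z-y)\geq v_\phi(y)-r\,|Dv_\phi(y)|, \qquad z \in B_r(y).
$$
Combined with the elementary identity $Dv_\phi(y)\cdot y\geq v_\phi(y)$ (coming from convexity together with $v_\phi(0)=0$), the proof then reduces to a uniform bound of the form $|Dv_\phi(y)|\leq C\, v_\phi(y)/|y|$, which closes the estimate by choosing $R\geq 2Cr$, yielding $r|Dv_\phi(y)|\leq \tfrac{1}{2}v_\phi(y)$ and hence $v_\phi(z)\geq \tfrac{1}{2}v_\phi(y)$.

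The hard part is precisely this last gradient bound: the Monge-Amp\`ere condition $\gamma\leq \det(D^2\phi)\leq \Gamma$ does not directly control individual eigenvalues of $D^2\phi$, so a priori $|Dv_\phi(y)|$ could exceed $v_\phi(y)/|y|$ by an unbounded factor. I would overcome this by exploiting the structure of the sections $S_t^\phi$: by Fritz John's lemma applied to these convex sections, the Monge-Amp\`ere bounds ensure they are comparable to ellipsoids with uniformly controlled eccentricity, which forces the angle between $y$ and $Dv_\phi(y)$ to stay uniformly away from $\pi/2$ and hence yields the required inequality $|Dv_\phi(y)|\leq C\, v_\phi(y)/|y|$. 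As a back-up route, one may instead apply Lemma~\ref{PREM1} locally around each $y$ (giving a quadratic comparability of $v_\phi$) together with the radial monotonicity $v_\phi(\lambda y)\geq \lambda v_\phi(y)$ for $\lambda\geq 1$, and conclude by a chaining argument across $B_r(y)$ that absorbs the anisotropy of $D^2\phi$ into constants depending only on $r$ and $\phi$.
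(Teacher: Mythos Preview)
Your argument hinges on the gradient bound $|Dv_\phi(y)|\le C\,v_\phi(y)/|y|$, and the justification you give for it is incorrect. Fritz John's lemma together with the Monge--Amp\`ere bounds $\gamma\le\det D^2\phi\le\Gamma$ controls the \emph{volume} of the sections ($|S_t^\phi|\approx t^n$), not their eccentricity: already for the admissible quadratic $v_\phi(y)=a\,y_1^2+a^{-1}y_2^2$ the sections are ellipses of aspect ratio $a$, and at $y=(1,a)$ one has $v_\phi(y)/|y|\approx 2$ while $|Dv_\phi(y)|\approx 2a$, so your inequality forces $C\gtrsim a$ and cannot come from ``uniformly controlled eccentricity''. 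Any constant you extract this way depends on $\lambda_{\max}(D^2\phi)/\lambda_{\min}(D^2\phi)$, which the determinant bounds alone do not control globally. Your back-up via Lemma~\ref{PREM1} has the same defect, since the constant there is precisely this eigenvalue ratio on the relevant ball and is therefore not a priori uniform in $y$; the ``chaining'' sketch does not explain how this is absorbed.

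The paper's route is far shorter and avoids the issue entirely. One writes
\[
\mathcal{W}_\phi(z)=\frac{1+v_\phi(y)^{(n+\sigma_0)/2}}{1+v_\phi(z)^{(n+\sigma_0)/2}}\,\mathcal{W}_\phi(y),
\]
uses $v_\phi(y)\le |v_\phi(y)-v_\phi(z)|+v_\phi(z)$ and $(a+b)^p\le 2^{p}(a^p+b^p)$ to obtain
\[
\mathcal{W}_\phi(z)\le 2^{(n+\sigma_0)/2}\Bigl(1+|v_\phi(y)-v_\phi(z)|^{(n+\sigma_0)/2}\Bigr)\mathcal{W}_\phi(y)\le C(n,\sigma_0)\bigl(1+\rho(r)^{(n+\sigma_0)/2}\bigr)\mathcal{W}_\phi(y),
\]
where $\rho$ is the modulus of continuity of $v_\phi$ (equivalently of $\phi$, since they differ by an affine map). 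No convexity inequality, regime splitting, or gradient estimate enters.
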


\begin{proof}
$$
	\begin{array}{lll}
	\mathcal{W}_\phi(z) & = & \displaystyle \frac{1}{1 + v_\phi(z)^{\frac{n+\sigma_0}{2}}} =  \displaystyle \frac{1 + v_\phi(y)^{\frac{n+\sigma_0}{2}}}{1 + v_\phi(z)^{\frac{n+\sigma_0}{2}}}\mathcal{W}_\phi(y)\\
		 & & \\
		 & = & \displaystyle \frac{1 + (v_\phi(y) - v_\phi(z) + v_\phi(z))^{\frac{n+\sigma_0}{2}}}{1 + v_\phi(z)^{\frac{n+\sigma_0}{2}}}\mathcal{W}_\phi(y)\\
		 & & \\
		 & \leq & \displaystyle \frac{1 + (|v_\phi(y) - v_\phi(z)| + v_\phi(z))^{\frac{n+\sigma_0}{2}}}{1 + v_\phi(z)^{\frac{n+\sigma_0}{2}}}\mathcal{W}_\phi(y)\\
		 & & \\
		 & \leq & \displaystyle \frac{1 + 2^{\frac{n+\sigma_0}{2}}(|v_\phi(y) - v_\phi(z)|^\frac{n+\sigma_0}{2} + v_\phi(z)^\frac{n+\sigma_0}{2})}{1 + v_\phi(z)^{\frac{n+\sigma_0}{2}}}\mathcal{W}_\phi(y)\\
		 & & \\
		 &\leq & \displaystyle 2^{\frac{n + \sigma_0}{2}}\left(1 + \frac{|v_\phi(y) - v_\phi(z)|^\frac{n+\sigma_0}{2}}{1 + v_\phi(z)^{\frac{n+\sigma_0}{2}}}\right)\mathcal{W}_\phi(y)\\
		 & & \\
		 & \leq & C(n,\sigma_0) \left( 1 + \rho(r)^{\frac{n+\sigma_0}{2}} \right) \mathcal{W}_\phi(y)\\
		 & & \\
			 & = & C_r \mathcal{W}_{\phi}(y), 
	\end{array}
$$
for every $z \in B_r(y)$, with $C_r = C(\sigma_0,r,n)$ and where $\rho$ is the modulus of continuity of $v_\phi$. 

\end{proof}

It is important to notice that the modulus of continuity of $v_\phi$ is the same as that of $\phi$, since $D^2\phi = D^2 v_\phi$. Therefore, the constant $C_r$ is the same for any solution $\Psi$ in the Monge-Amp\`ere class
$$
    \gamma < det(D^2 \Psi) < \Gamma.
$$

The next statement guarantees that the test functions in Definition $\ref{VS}$ are suitable for our operators. These computations have already been made in  \cite{caffarelli2018fully} but we need to assure here they are invariant with respect to solutions of the Monge-Amp\`ere equation.

\begin{proposition}\label{HIP24}
Let $g:\mathbb{R}^n \rightarrow \mathbb{R}$ be a function such that  $g \in \mathcal{C}^2(B_r(x))$,
$$
 \left\|g\right\|_{L^1(\mathbb{R}^n, \mathcal{W}_\phi)} \leq M \quad and \qquad  \left| g(y) - l_{D g(x)}(y) \right| \leq M |y-x|^2, \quad y \in B_r(x),
$$
where $l_{D g(x)}(y) := g(x) + Dg(x) \cdot (y-x)$. Then, there exists a constant $C$ such that 
\begin{equation}
|g(x)| \leq CM
\label{dutra}
\end{equation}
and
\begin{equation}
\left| L[g](x) \right| \leq CM, \quad \forall L \in \mathcal{L}^0_{\phi}(\sigma).
\label{truman}
\end{equation}

\end{proposition}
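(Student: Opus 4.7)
The plan is to establish \eqref{dutra} first by a local averaging argument and then leverage it, together with a near/far decomposition of the singular integral, to derive \eqref{truman}. For \eqref{dutra}, I would fix a radius $r'\leq r$ and integrate the pointwise expansion
\[
g(y) = g(x) + Dg(x)\cdot(y-x) + R(y), \qquad |R(y)| \leq M|y-x|^2,
\]
over $B_{r'}(x)$. The linear term vanishes by symmetry of Lebesgue measure on the centered ball, so
\[
|B_{r'}|\,|g(x)| \leq \int_{B_{r'}(x)}|g(y)|\,dy + CM\,|B_{r'}|\,r'^{\,2}.
\]
Proposition~\ref{HIP25}, applied with the roles of $x$ and $z$ swapped (using $x\in B_{r'}(z)$ whenever $z\in B_{r'}(x)$), furnishes the lower bound $\mathcal{W}_\phi(z)\geq \mathcal{W}_\phi(x)/C_{r'}$ on $B_{r'}(x)$; hence $\int_{B_{r'}(x)}|g|\,dy\leq (C_{r'}/\mathcal{W}_\phi(x))\|g\|_{L^1(\mathbb{R}^n,\mathcal{W}_\phi)}\leq CM$, and \eqref{dutra} follows.

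For \eqref{truman}, I would write $L[g](x) = I_{\mathrm{in}} + I_{\mathrm{out}}$, where $I_{\mathrm{in}}$ is the integral over a section $S^\phi_{r'}$ chosen, by continuity of $v_\phi$, small enough that $S^\phi_{r'}\subset B_r(0)$. Since $\delta(g,x,y):=g(x+y)+g(x-y)-2g(x)$ is automatically even in $y$, the odd (gradient) contributions in the Taylor expansions cancel \emph{without} invoking any symmetry of $\mathcal{K}$, leaving $|\delta(g,x,y)|\leq 2M|y|^2$ on $S^\phi_{r'}$. Lemma~\ref{PREM1}, applied with $\Psi_1(y)=|y|^2$ and $\Psi_2=\phi$, yields $|y|^2\leq C\,v_\phi(y)$ on $\overline{S^\phi_{r'}}$, and therefore
\[
|I_{\mathrm{in}}| \leq 2\Lambda M(2-\sigma)\int_{S^\phi_{r'}}\frac{|y|^2}{v_\phi(y)^{\frac{n+\sigma}{2}}}\,dy \leq CM(2-\sigma)\int_0^R r^{1-\sigma}\,dr \leq CM,
\]
which is uniform in $\sigma\in[\sigma_0,2)$ since the $(2-\sigma)$ factor cancels the borderline singularity.

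For the far part, Proposition~\ref{HIP23} gives $\mathcal{K}(y)\leq C_{r'}\mathcal{W}_\phi(y)$ on the complement of $S^\phi_{r'}$, so the triangle inequality produces
\[
|I_{\mathrm{out}}| \leq C_{r'}\!\int_{\mathbb{R}^n\setminus S^\phi_{r'}}\!\bigl(|g(x+y)|+|g(x-y)|+2|g(x)|\bigr)\mathcal{W}_\phi(y)\,dy.
\]
For the two $g(x\pm y)$ pieces, the substitutions $z=x\pm y$ combined with a second use of Proposition~\ref{HIP25}---using $|x|\leq\mathrm{diam}(\Omega)$ to compare $\mathcal{W}_\phi$ evaluated at $y$ with $\mathcal{W}_\phi$ evaluated at $z$---reduce each term to $C\|g\|_{L^1(\mathbb{R}^n,\mathcal{W}_\phi)}\leq CM$. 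The remaining $|g(x)|$ piece is controlled by combining \eqref{dutra} with $\mathcal{W}_\phi\in L^1(\mathbb{R}^n)$, which is a consequence of \eqref{INTCONDFORPHI}. The main obstacle I foresee is purely one of bookkeeping: ensuring that the constants emerging from the near/far splitting, the translations, and the weight comparisons remain uniform in $\sigma$ and robust against the asymmetry of $\mathcal{K}$---exactly the delicate point the authors flag in the introduction.
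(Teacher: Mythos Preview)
Your overall strategy mirrors the paper's: a near/far split at a section $S^\phi_{r'}$, with the near part handled via $|\delta(g,x,y)|\le 2M|y|^2$ and Lemma~\ref{PREM1}, and the far part via Propositions~\ref{HIP23}--\ref{HIP25}. Your averaging argument for \eqref{dutra} is a clean variant (the paper instead integrates the pointwise inequality $2|g(x)|\le 2M|y|^2+|g(x+y)|+|g(x-y)|$ against $\mathcal W_\phi$ over $B_r(0)$), and your inner estimate for \eqref{truman} is essentially identical to theirs.

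The gap is in your far-region treatment of $\int_{\mathbb R^n\setminus S^\phi_{r'}}|g(x-y)|\,\mathcal W_\phi(y)\,dy$. You claim that the substitution $z=x-y$ together with Proposition~\ref{HIP25} and $|x|\le\mathrm{diam}(\Omega)$ reduces this to $\|g\|_{L^1(\mathcal W_\phi)}$, but after substituting you need $\mathcal W_\phi(x-z)\le C\,\mathcal W_\phi(z)$, and Proposition~\ref{HIP25} only delivers this when $|(x-z)-z|=|x-2z|$ is bounded---which it is not. Nothing in the hypotheses forces $v_\phi$ to be even, so $\mathcal W_\phi(y)$ and $\mathcal W_\phi(-y)$ are not a priori comparable; the trick that works for $z=x+y$ (where $|y-z|=|x|$) genuinely fails here. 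This is exactly the asymmetry obstruction the paper isolates: its proof either assumes $\mathcal K$ symmetric, so that the change $y\mapsto -y$ reduces the term to the $g(x+y)$ case, or---in the asymmetric setting of Remark~\ref{symmetryofkernels}---abandons the weighted-$L^1$ route for this piece and instead uses a pointwise growth bound $|g|\lesssim|\cdot|^{1+\alpha_1}$ combined with assumption~[A1]. Your dismissal of this point as bookkeeping is precisely where the argument breaks.

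A minor aside: for the constant piece $|g(x)|\int\mathcal W_\phi$ you appeal to \eqref{INTCONDFORPHI}, whereas the paper (which states Proposition~\ref{HIP24} before introducing [A1]) bounds $\int_{\mathbb R^n\setminus S^\phi_\eta}v_\phi^{-(n+\sigma)/2}$ directly via the layer-cake formula and $|S^\phi_r|\approx r^n$. Either route is fine.
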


\begin{proof}
Observe that, for $y \in B_r(0)$, we have
$$\begin{array}{cl}
 |\delta(g,x,y)|  \hspace*{-0.2cm} & = |2g(x) - g(x+y) - g(x-y)|\\
&  \\
&  = |2g(x) - g(x+y) - g(x-y) - l_{D g(x)}(x+y) + l_{D g(x)}(x+y)|\\
& \\
&  \leq \left| g(x-y) - l_{D g(x)}(x-y) \right| + \left| g(x+y) - l_{D g(x)}(x+y) \right| \\
& \\
&  \leq 2M|y|^2.
\end{array}$$
Using the reverse triangle inequality and multiplying both sides by the weight $\mathcal{W}_\phi$, we obtain
$$
 2|g(x)| \mathcal{W}_\phi(y)\leq 2M|y|^2\mathcal{W}_\phi(y) + |g(x+y)|\mathcal{W}_\phi(y) + |g(x-y)|\mathcal{W}_\phi(y).
$$
Since $\mathcal{W}_\phi(y) \leq 1$ and $|y| \leq r$, integrating over $B_r(0)$, we reach
$$
    \begin{array}{lll}
        \displaystyle 2|g(x)|\int_{B   _r}\mathcal{W}_\phi(y)dy & \leq &\displaystyle  2Mr^2|B_r| + \int_{B_r} |g(x+y)|\mathcal{W}_\phi(y)dy\\
         & & \\
         & & \displaystyle +\int_{B_r}|g(x-y)|\mathcal{W}_\phi(y)dy.
    \end{array}
$$
We now bound the two integrals on the right-hand side using Proposition \ref{HIP25}. To uniformly bound
$$
    \int_{B_r} \mathcal{W}_\phi(y)dy 
$$
from below (independently of scalings of the form $\lambda^{-2}v_\phi(\lambda x)$), we use Lemma 1 to get 
$$
    1 + v_{\phi} (y)^{\frac{n+\sigma_0}{2}} = 1 + \left[\frac{v_{\phi}(y)}{|y|^2}\right]^{\frac{n+\sigma_0}{2}}|y|^{n+\sigma_0} \leq   C(r,\phi,n),
$$
thus obtaining \eqref{dutra}.

To prove \eqref{truman}, we start by choosing $\eta$, depending on the modulus of continuity of $\phi$, such that $S^\phi_\eta \subset B_r(0)$.
Then, if $L \in \mathcal{L}^0_\phi(\sigma)$, we have
$$
	\begin{array}{lll}
		\displaystyle \left| \int_{S^\phi_\eta}\delta (g,x,y)\mathcal{K}(y)dy \right|& \leq & \displaystyle (2-\sigma)\Lambda M \int_{S^\phi_\eta}|y|^2\frac{1}{v_\phi(y)^{\frac{n+\sigma}{2}}}dy\\
		 & & \\
		 & \leq & \displaystyle (2-\sigma)\Lambda MC \int_{S^\phi_\eta}|y|^2|y|^{-n-\sigma}dy\\
		 & & \\
		 & \leq & \displaystyle (2-\sigma)\Lambda MC \int_{B_r}|y|^2|y|^{-n-\sigma}dy\\
		 & & \\
		 & = & \displaystyle (2-\sigma)\Lambda MC|\partial B_1| \int_{0}^{r}s^2s^{-n-\sigma}s^{n-1}ds\\
		 & & \\
		 & = & \displaystyle C_1(\Lambda,n,C,r) M \\
	\end{array}
$$
The constant $C = C(r, \phi)$ is from Lemma $(\ref{PREM1})$, which turns out to be invariant by scalings of the form $\lambda^{-2}\phi(\lambda x)$.\par 
Now we estimate the integral at infinity. Let us separate it in three parts.
$$
	\begin{array}{lll}
		\displaystyle \int_{\mathbb{R}^n \backslash S^\phi_\eta}|g(x+y)| \mathcal{K}(y)dy & \leq & \displaystyle C_\eta \int_{\mathbb{R}^n \backslash S^\phi_\eta}|g(x+y)|\mathcal{W}_\phi(y)dy\\
		& & \\
		& \leq & \displaystyle C_\eta \int_{\mathbb{R}^n \backslash S^\phi_\eta}|g(x+y)|\sup_{z \in B_{1}(x+y)}\mathcal{W}_\phi(z)dy\\
		& & \\
		& \leq & \displaystyle C_\eta C_1 \int_{\mathbb{R}^n \backslash S^\phi_\eta}|g(x+y)|\mathcal{W}_\phi(x+y)dy\\
		& & \\
		& \leq & C(\eta,n,\sigma_0, \rho(1))\left\|g\right\|_{L^1(\mathbb{R}^n, \mathcal{W}_\phi)}\\
		& & \\
		& \leq & C(\eta,n,\sigma_0, \rho(1))M.

	\end{array}
$$
where $C_1$ is the constant from Proposition $\ref{HIP25}$ and $\rho$ stands for the modulus of continuity of $\phi$. 

We may assume that the kernels are symmetric (but see Remark $\ref{symmetryofkernels}$ at the end of this section) and apply a similar reasoning to get
$$
		 \displaystyle \int_{\mathbb{R}^n \backslash S^\phi_\eta}|g(x-y)|\sup_{z \in B_{1}(x-y)}\mathcal{W}_\phi(z)dy \leq C(\eta,n,\sigma_0,\rho(1))M.
$$

Finally, using \eqref{dutra}, we can estimate
$$
	\begin{array}{lll}
		\displaystyle \int_{\mathbb{R}^n \backslash S^\phi_\eta} 2|g(x)|\mathcal{K}(y)dy & \leq & \displaystyle CM \int_{\mathbb{R}^n \backslash S^\phi_\eta}\frac{1}{v_\phi(y)^{\frac{n+\sigma}{2}}}dy.
	\end{array}
$$
Notice that we can show that the quantity
$$
	\int_{\mathbb{R}^n \backslash 
	S^\phi_\eta}\frac{1}{v_\phi(y)^{\frac{n+\sigma}{2}}}dy
$$
is finite employing the elementary layer-cake formula \cite[Lemma A.36]{figalli2017monge} and the fact that $|S_r^\phi | \approx r^n$ (depending only on the bounds of the Monge-Ampère equation), as shown in \cite{gutierrez2000geometric}. Indeed,
$$
	\begin{array}{rl}
		\displaystyle \int_{\mathbb{R}^n \backslash S^\phi_\eta}\frac{1}{v_\phi(y)^{\frac{n+\sigma}{2}}}dy  = & \displaystyle (n + \sigma)\int_{0}^{\infty} t^{n + \sigma -1}\left|(\mathbb{R}^n \backslash S^\phi_\eta) \cap \left\{\frac{1}{v_\phi(y)^{\frac{1}{2}}} > t\right\}\right| dt\\
		 & \\
		 = & \displaystyle (n + \sigma)\int_{0}^{\infty} t^{n + \sigma -1}\left|(\mathbb{R}^n \backslash S^\phi_\eta) \cap S_{\frac{1}{t}}\right| dt\\
		 & \\
		 = & \displaystyle (n + \sigma)\int_{0}^{\frac{1}{\eta}} t^{n + \sigma -1}\left|(\mathbb{R}^n \backslash S^\phi_\eta) \cap S_{\frac{1}{t}}\right| dt\\
		 & \\
		 \leq & \displaystyle C(n + \sigma)\int_{0}^{\frac{1}{\eta}} t^{n + \sigma -1}\frac{1}{t^n} dt\\
		 & \\
		 = & C\displaystyle \frac{(n + \sigma)}{\sigma}\frac{1}{\eta^{\sigma}}.
	\end{array}
$$
Putting all estimates together, we obtain that
$$
  \left| L[g](x) \right| =   \left|\int_{\mathbb{R}^n}\delta(g,x,y) \mathcal{K}(y)dy \right| \leq CM.
$$

\end{proof}

We need to impose an integrability assumption on the solution $\phi \in \mathcal{C}^2(\mathbb{R}^n)$ of the Monge-Amp\`ere equation. 

\bigskip

\noindent {\bf [A1]} The function $\zeta:\mathbb{R}^n \rightarrow \mathbb{R}$ defined by $\zeta(y) = |y|^{1+\alpha_1}$ satisfies
\begin{equation}
    \zeta \in L^1 \left( \mathbb{R}^n, \mathcal{W}_{\phi} \right),
\label{MAASSUMP2}
\end{equation}
for some $\alpha_1 < \sigma_0 - 1$.

\bigskip

A simple example of a function in $\mathbb{R}^2$ satisfying the assumption is  
$$\phi(y_1,y_2) = y_1^2 +  y_2^2 + \frac{1}{2}\sin(y_1).$$ 

\begin{proposition}\label{INTGRABILITYREMARK}
Let assumption [A1] be in force and let $\phi_\lambda (x)= \lambda^{-2}\phi(\lambda x)$. Then
$$
   \left\|\zeta\right\|_{L^1 \left( \mathbb{R}^n, \mathcal{W}_{\phi_\lambda} \right) } \leq C \left( n,\sigma_0,\alpha_1,\phi,\left\|\zeta\right\|_{L^1 \left( \mathbb{R}^n, \mathcal{W}_{\phi} \right) } \right).
$$
\end{proposition}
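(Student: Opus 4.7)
My plan is to reduce the $\mathcal{W}_{\phi_\lambda}$-weighted integral to one involving $v_\phi$ by a single change of variables, extract an absorbing prefactor $\lambda^{\sigma_0-1-\alpha_1}$, and then estimate the remaining integral by splitting $\mathbb{R}^n$ into three regions, combining the Monge-Amp\`ere volume estimate $|S_r^\phi| \leq Cr^n$ (from \cite{gutierrez2000geometric}), Lemma \ref{PREM1}, and hypothesis [A1].

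I begin by computing the scaling identity. Since $D\phi_\lambda(0) = \lambda^{-1}D\phi(0)$, one verifies directly that $v_{\phi_\lambda}(y) = \lambda^{-2}v_\phi(\lambda y)$. Substituting $z = \lambda y$ in $\int_{\mathbb{R}^n} |y|^{1+\alpha_1}\mathcal{W}_{\phi_\lambda}(y)\,dy$ and clearing the factor $\lambda^{-(n+\sigma_0)}$ from the denominator yields
$$
\|\zeta\|_{L^1(\mathbb{R}^n, \mathcal{W}_{\phi_\lambda})} = \lambda^{\sigma_0-1-\alpha_1} \int_{\mathbb{R}^n} \frac{|z|^{1+\alpha_1}}{\lambda^{n+\sigma_0} + v_\phi(z)^{(n+\sigma_0)/2}}\,dz.
$$
By [A1], the exponent $\sigma_0 - 1 - \alpha_1$ is strictly positive, so for $\lambda \in (0,1]$ the prefactor is at most $1$; it remains to show that the integral on the right grows at most like $\lambda^{-(\sigma_0-1-\alpha_1)}$.

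I then split $\mathbb{R}^n$ as $S_\lambda^\phi \cup (S_1^\phi \setminus S_\lambda^\phi) \cup (\mathbb{R}^n \setminus S_1^\phi)$. On $\mathbb{R}^n \setminus S_1^\phi$, since $v_\phi^{(n+\sigma_0)/2} \geq 1 \geq \lambda^{n+\sigma_0}$, the integrand is pointwise dominated by $2 |z|^{1+\alpha_1}\mathcal{W}_\phi(z)$, so [A1] bounds that contribution by $2\|\zeta\|_{L^1(\mathbb{R}^n,\mathcal{W}_\phi)}$. For the two bounded regions I will invoke Lemma \ref{PREM1}, applied on a fixed ball containing $S_1^\phi$ with $\Psi_2(z)=|z|^2/2$, to obtain $|z|^2 \leq C v_\phi(z)$ there. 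On $S_\lambda^\phi$ this yields $|z|^{1+\alpha_1} \leq C\lambda^{1+\alpha_1}$, and combining with $|S_\lambda^\phi| \leq C\lambda^n$ produces a contribution of order $\lambda^{1+\alpha_1-\sigma_0}$, canceled by the prefactor. On the annular region $S_1^\phi \setminus S_\lambda^\phi$ I dyadically decompose into $S_{2^{-k}}^\phi \setminus S_{2^{-k-1}}^\phi$ for $0 \leq k \leq N$ with $2^{-N-1} < \lambda \leq 2^{-N}$; on each annulus the integrand is controlled by $C\cdot 2^{k(n+\sigma_0-1-\alpha_1)}$, the volume is at most $C 2^{-kn}$, and summing the resulting geometric series yields a total of order $\lambda^{-(\sigma_0-1-\alpha_1)}$, as needed.

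The annular region is the main obstacle: neither the exterior estimate from [A1] nor the small-section volume argument applies directly on it, and only by matching the dyadic sum against the $\lambda^{\sigma_0-1-\alpha_1}$ prefactor does one recover a bound uniform in $\lambda$. The strict inequality $\alpha_1 < \sigma_0 - 1$ from [A1] is essential at this step, since equality would introduce a logarithmic factor $|\log \lambda|$ in the geometric sum and destroy the uniform bound.
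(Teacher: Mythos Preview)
Your argument is correct and mirrors the paper's strategy: change variables $z=\lambda y$, split $\mathbb{R}^n$ into three regions, and combine Lemma~\ref{PREM1} with assumption [A1]. One slip: to extract $|z|^2 \le C\,v_\phi(z)$ from Lemma~\ref{PREM1} you need $\Psi_1=|\cdot|^2$ and $\Psi_2=\phi$, not $\Psi_2=|\cdot|^2/2$ as written; the inequality you go on to use is the correct one.

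The paper differs in the choice of decomposition: it splits along Euclidean balls $B_\lambda$, $B_1\setminus B_\lambda$, $\mathbb{R}^n\setminus B_1$ rather than along sections of $\phi$. This lets the annular term be handled by a single explicit radial integral $\int_\lambda^1 r^{\alpha_1-\sigma_0}\,dr$ (after bounding $v_\phi^{-(n+\sigma_0)/2}\le C|z|^{-(n+\sigma_0)}$ via Lemma~\ref{PREM1}), in place of your dyadic sum over section annuli and the Monge--Amp\`ere volume estimate $|S_r^\phi|\le C r^n$. Conversely, your section-based split makes the exterior piece cleaner, since $v_\phi\ge 1$ holds pointwise outside $S_1^\phi$, whereas the paper must appeal to [A1] to control $\int_{\mathbb{R}^n\setminus B_1}|z|^{1+\alpha_1}v_\phi^{-(n+\sigma_0)/2}\,dz$. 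Both routes yield the same uniform bound, and your remark that the strict inequality $\alpha_1<\sigma_0-1$ prevents a logarithmic divergence is exactly what makes the paper's radial integral finite as well.
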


\begin{proof}
We have
$$
    \begin{array}{lll}
        \displaystyle  \left\|\zeta\right\|_{L^1(\mathbb{R}^n, \mathcal{W}_{\phi_\lambda})}& =  & \displaystyle \int_{\mathbb{R}^n} |y|^{1+\alpha_1} \frac{1}{1+ (\lambda^{-2}v_{\phi}(\lambda y))^{\frac{n+\sigma_0}{2}}} dy  \\
         &  & \\
         & = & \displaystyle \lambda^{-n-1-\alpha_1}\int_{\mathbb{R}^n} |z|^{1+\alpha_1} \frac{1}{1+ (\lambda^{-2}v_{\phi}(z))^{\frac{n+\sigma_0}{2}}}dz\\
         & & \\
         & = & \lambda^{-n-1-\alpha_1}(A_1 + A_2 + A_3),
    \end{array}
$$
where $A_1, A_2, A_3$ stand for the integrals of $|.|^{1+\alpha_1}(1 + (\lambda^{-2}v_{\phi}(.))^{\frac{n+\sigma_0}{2}})^{-1}$ over the sets $B_\lambda$, $B_1\backslash B_\lambda$ and $\mathbb{R}^n \backslash B_1$, respectively. 

Now,
$$
A_1  \leq \int_{B_\lambda} |z|^{1+\alpha_1}dz \leq C(n)\lambda^{n+1+\alpha_1},
$$
and so $\lambda^{-n-1-\alpha_1}A_1 \leq C(n)$. For $A_2$, we have 
$$
    A_2 \leq \lambda^{n+\sigma_0}\int_{B_1 \backslash B_\lambda} |z|^{1+\alpha_1} \frac{1}{v_{\phi}(z)^{\frac{n+\sigma_0}{2}}}dz.
$$
By Lemma \ref{PREM1}, with $r=1$, $\Psi_1 = |.|^2$ and $\Psi_2 = \phi$, we get a constant $C(\phi)$ such that
$$
    \frac{|z|^2}{v_\phi(z)} \leq C(\phi), \quad \mbox{for $z \in \overline{B}_1$}.
$$
Then
\begin{eqnarray*}
A_2 & \leq & C(\phi,n)\lambda^{n+\sigma_0}\int_{B_1 \backslash B_\lambda}|z|^{1+\alpha_1}\frac{1}{|z|^{n+\sigma_0}}dz\\
&  = & C(\phi,n)\lambda^{n+\sigma_0} \int_{\lambda}^{1}r^{\alpha_1 - \sigma_0}dr \\
& \leq & C(\phi,n)\lambda^{n+\sigma_0}\frac{1}{\sigma_0 - 1 - \alpha_1}[\lambda^{\alpha_1 - \sigma_0 + 1} - 1].
\end{eqnarray*}
Therefore, we get 
$$
    \lambda^{-n-1-\alpha_1}A_2 \leq C(\phi,n,\sigma_0,\alpha_1).
$$
Finally, for $A_3$, we have $$
    \lambda^{-n-1-\alpha_1}A_3 \leq \lambda^{\sigma_0 -1 - \alpha_1}\int_{\mathbb{R}^n \backslash B_1}|z|^{1+\alpha_1}\frac{1}{v_\phi(z)^{\frac{n+\sigma_0}{2}}}dz .
$$
Therefore, if assumption [A1] holds, we can get a uniform bound (in the parameter $\lambda$) for the term $A_3$. 

\end{proof}

\begin{rem}\label{symmetryofkernels}
If the kernels are not symmetric, we control the quantity 
$$
    \int_{\mathbb{R}^n \backslash S^\phi_\eta} |g(x-y)|\frac{1}{v_\phi(y)^{\frac{n+\sigma}{2}}}dy,
$$
using assumption [A1] and a natural growth condition. Indeed, assuming $g:\mathbb{R}^n \rightarrow \mathbb{R}$ is a function such that
$$
       |g(y)| \leq |y|^{1+\alpha_1} \quad \mbox{for  $y \in \mathbb{R}^n \backslash S^\phi_\eta$},
$$
for some $\eta>0$, we obtain

$$\begin{array}{lll}
        \displaystyle \int_{\mathbb{R}^n \backslash S^\phi_\eta} |g(x-y)|\frac{1}{v_\phi(y)^{\frac{n+\sigma}{2}}}dy & \leq & \displaystyle \int_{\mathbb{R}^n \backslash S^\phi_\eta} |x-y|^{1+\alpha_1}\frac{1}{v_\phi(y)^{\frac{n+\sigma}{2}}}dy \\
         & & \\
         & \leq & \displaystyle C(|x|)\int_{\mathbb{R}^n \backslash S^\phi_\eta} |y|^{1+\alpha_1}\frac{1}{v_\phi(y)^{\frac{n+\sigma}{2}}}dy\\
         & & \\
         & \leq & \displaystyle C(\eta,|x|)\int_{\mathbb{R}^n \backslash S^\phi_\eta} |y|^{1+\alpha_1}\mathcal{W}_\phi(y) dy.
    \end{array}$$
In our main theorem, the growth condition stated above will appear naturally and is, by no means, restrictive.
\end{rem}

\section{Approximation results}\label{SECTION3}
In this section we are going to deliver the key lemma of this paper, consisting in showing that if the operators in our class are close in some suitable sense then so are their solutions. 

We start with a standard result that makes use of the ellipticity structure to restrict the set of test functions in Definition \ref{VS}, which is important to simplify the proof of the stability result. The proof is essentially the same as in \cite{caffarelli2011regularity} but we include it here for the reader's convenience.

\begin{lema}\label{RA1}
	In Definition \ref{VS}, it is enough to consider, as test functions $\varphi$, quadratic polynomials and, as neighbourhoods $N,$ balls centered at the touching point.
\end{lema}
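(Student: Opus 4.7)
The plan is to replace an arbitrary $\mathcal{C}^2$ test function $\varphi$ by a quadratic polynomial that touches $u$ strictly from above in a ball centered at the contact point, apply the restricted definition, and send the regularizing parameters to zero using the ellipticity of $\mathcal{I}$. Focusing on the subsolution case, take $\varphi \in \mathcal{C}^2(\overline{N})$ with $\varphi(x_0)=u(x_0)$ and $\varphi > u$ on $N \setminus \{x_0\}$, and for each $\epsilon > 0$ set
\begin{equation*}
P_\epsilon(y) := \varphi(x_0) + D\varphi(x_0)\cdot(y-x_0) + \tfrac12 D^2\varphi(x_0)(y-x_0)\cdot(y-x_0) + \epsilon|y-x_0|^2.
\end{equation*}
By Taylor's theorem, $P_\epsilon(y) - \varphi(y) = \epsilon|y-x_0|^2 + o(|y-x_0|^2)$, so for every sufficiently small $r > 0$ with $\overline{B}_r(x_0) \subset N$ one has $P_\epsilon \geq \varphi$ on $B_r(x_0)$ with equality only at $x_0$, and hence $P_\epsilon$ touches $u$ strictly from above at $x_0$ inside $B_r(x_0)$. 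Denote by $\tau^\varphi$ and $\tau^{P_\epsilon,r}$ the associated test functions from Definition \ref{VS}, built, respectively, from $\varphi$ on $N$ and from $P_\epsilon$ on $B_r(x_0)$ (and $u$ elsewhere in both cases). The restricted hypothesis then gives $\mathcal{I}[\tau^{P_\epsilon,r}, x_0] \geq f(x_0)$.

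The next step is to compare the two test functions via ellipticity. Setting $w := \tau^\varphi - \tau^{P_\epsilon,r}$, so that $w(x_0) = 0$, ellipticity with respect to $\mathcal{L}^0_\phi(\sigma)$ yields
\begin{equation*}
\mathcal{I}[\tau^\varphi, x_0] \geq \mathcal{I}[\tau^{P_\epsilon,r}, x_0] + M^-_{\mathcal{L}^0_\phi(\sigma)}[w](x_0) \geq f(x_0) + M^-_{\mathcal{L}^0_\phi(\sigma)}[w](x_0),
\end{equation*}
so it suffices to show that $M^-_{\mathcal{L}^0_\phi(\sigma)}[w](x_0) \to 0$ as $r,\epsilon \to 0$. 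On $B_r(x_0)$ the difference $w = \varphi - P_\epsilon$ is $\mathcal{C}^2$ with $w(x_0)=0$, $Dw(x_0)=0$ and $D^2 w(x_0) = -2\epsilon I$, so $\delta(w, x_0, y) = -2\epsilon|y|^2 + o(|y|^2)$ for $|y| < r$. For $|y| > r$ the points $x_0 \pm y$ lie outside $B_r(x_0)$, so $w(x_0 \pm y)$ equals either $(\varphi - u)(x_0 \pm y) \geq 0$ on $N$ or $0$ outside $N$; hence $\delta(w, x_0, y) \geq 0$. Therefore, only the region $\{|y| < r\}$ contributes negatively to the infimum.

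To control this contribution uniformly over $L \in \mathcal{L}^0_\phi(\sigma)$, invoke Lemma \ref{PREM1} with $\Psi_1 = |\cdot|^2$ and $\Psi_2 = \phi$ to obtain $|y|^2 \leq C(\phi) v_\phi(y)$ on $B_r(x_0)$, so that
\begin{equation*}
\left| \int_{B_r} \delta(w, x_0, y)\, \mathcal{K}(y)\, dy \right| \leq C \epsilon\, (2-\sigma) \int_{B_r} \frac{|y|^2}{v_\phi(y)^{(n+\sigma)/2}}\, dy \leq C \epsilon\, r^{2-\sigma},
\end{equation*}
uniformly in $\sigma \in (1,2)$, thanks to the $(2-\sigma)$ normalization built into $\mathcal{L}^0_\phi(\sigma)$, which cancels the blow-up of $\int_0^r s^{1-\sigma}\, ds$ as $\sigma \to 2$. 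Letting $r,\epsilon \to 0$ gives $M^-_{\mathcal{L}^0_\phi(\sigma)}[w](x_0) \to 0$ and hence $\mathcal{I}[\tau^\varphi, x_0] \geq f(x_0)$. The supersolution case is symmetric, perturbing by $-\epsilon|y-x_0|^2$ and using $M^+_{\mathcal{L}^0_\phi(\sigma)}$ in place of $M^-_{\mathcal{L}^0_\phi(\sigma)}$. The main subtlety is exactly the uniform-in-$\sigma$ control of the small-$y$ contribution, which the $(2-\sigma)$-renormalization of the class is tailored to deliver.
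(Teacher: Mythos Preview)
Your argument is correct and follows essentially the same route as the paper's proof: replace $\varphi$ by a quadratic $P_\epsilon$ touching $u$ from above in a small ball, invoke the restricted definition, and use ellipticity to control the difference. The paper packages the error estimate differently---it bounds $\tau_{\bar\epsilon}-\tau_\varphi\le 2\bar\epsilon|x|^2\mathbbm{1}_{B_r}$ pointwise and then applies Proposition~\ref{HIP24} to $M^+[2\bar\epsilon|x|^2\mathbbm{1}_{B_r}]$---whereas you analyze the sign of $\delta(w,x_0,y)$ directly and redo the small-$y$ integral estimate via Lemma~\ref{PREM1}; both amount to the same computation.

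One minor imprecision: you claim $M^-_{\mathcal{L}^0_\phi(\sigma)}[w](x_0)\to 0$, but your analysis only yields the lower bound $M^-_{\mathcal{L}^0_\phi(\sigma)}[w](x_0)\ge -C\epsilon r^{2-\sigma}$ (the nonnegative contribution from $|y|>r$ need not vanish). This is harmless, since the inequality $\mathcal{I}[\tau^\varphi,x_0]\ge f(x_0)+M^-[w](x_0)$ requires only $\liminf M^-[w](x_0)\ge 0$, which is exactly what you establish.
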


\begin{proof}
	Let $\varphi \in C^2(\overline{N})$ be a test function that touches $u$ from above at a point $x_0$ and, for simplicity, assume $x_0 = 0$. Let $P_\epsilon$ be the following polynomial
	$$
	P_\epsilon(x) =  \frac{1}{2} \left( D^2\varphi(0) + \epsilon I_n \right) x \cdot x  +  \nabla \varphi(0) \cdot x   + \varphi(0).
	$$
	From Taylor's expansion, 
	$$
	\varphi(x) = \varphi(0) +  \nabla \varphi(0) \cdot x + \frac{1}{2} D^2\varphi(0)x \cdot x   + r_2(x), \quad \mbox{with} \quad \lim\limits_{x \rightarrow 0} \frac{r_2(x)}{|x|^2} = 0,
	$$
so, given $\overline{\epsilon}>0$, there exists $\mu>0$ such that, if  $|x|<\mu$, then
\begin{equation}\label{INEQLEMMA1}
\left| \varphi(x) - \left( \varphi(0) + \nabla \varphi(0) \cdot x  + \frac{1}{2} D^2\varphi(0)x \cdot x \right) \right| \leq \overline{\epsilon}|x|^2
\end{equation}
which implies
$$
\varphi(x) \leq P_{\overline{\epsilon}}(x), \quad \forall x \in B_\mu.
$$
Then $P_{\overline{\epsilon}} \geq \varphi \geq u$ in a neighbourhood $B_r  \subset N$, with $r < \mu$. 

Define
	\[ \tau_{\varphi}(x) = \left\{ \begin{array}{ll}
	\varphi(x) & \mbox{if $x \in N$}\\
	u(x) & \mbox{if $x \not \in N$}.\end{array} \right. \]
and
	\[ \tau_{\overline{\epsilon}}(x) = \left\{ \begin{array}{ll}
	P_{\overline{\epsilon}}(x) & \mbox{if $x \in B_r$}\\
	u(x) & \mbox{if $x \not \in B_r$}.\end{array} \right. \]
With $\mathbbm{1}$ denoting the indicator function, we have
$$\tau_{\overline{\epsilon}}(x) \leq \tau_\varphi(x) + 2\overline{\epsilon}|x|^2 \mathbbm{1}_{B_r}, \quad \forall  x \in \mathbb{R}^n,$$
which is obvious if $x \notin B_r$; for  $x \in B_r$, it follows from $(\ref{INEQLEMMA1})$ that
	$$
	-\overline{\epsilon} |x|^2 \leq \varphi(x) - \left( \varphi(0) +  \nabla \varphi(0) \cdot x + \frac{1}{2} D^2\varphi(0)x \cdot x \right) \leq \overline{\epsilon} |x|^2.
	$$
So we have,
	$$
	\begin{array}{lll}
	\tau_\varphi(x) & = & \varphi(x)\\
	& \geq & \varphi(0) +  \nabla \varphi(0) \cdot x + \frac{1}{2} D^2\varphi(0)x \cdot x - \overline{\epsilon} |x|^2\\
	& = & P_{\overline{\epsilon}}(x) - 2\overline{\epsilon} |x|^2\\
	& =& \tau_{\overline{\epsilon}}(x) - 2\overline{\epsilon}|x|^2 \mathbbm{1}_{B_r}.
	\end{array}
	$$
Since $P_{\overline{\epsilon}}$ is a quadratic polynomial such that $P_{\overline{\epsilon}} \geq \varphi \geq u$ and $\varphi$ touches $u$ from above at $0$, we have $P_{\overline{\epsilon}}(0) = \varphi(0) = u(0)$. Hence, $P_{\overline{\epsilon}}$ touches $u$ from above at $0$ and so, by the definition of viscosity subsolution, we have
	$$
	\mathcal{I}[\tau_{\overline{\epsilon}}(x),x] (0) \geq f(0).
	$$
	On the other hand, by uniform ellipticity, 
	$$
	\begin{array}{lll}
	\mathcal{I}[\tau_\varphi(x),x] (0) - \mathcal{I}[\tau_{\overline{{\epsilon}}}(x),x] (0)&\geq &-M^+_{\mathcal{L}^0_\phi(\sigma)} \left[ 2 \overline{\epsilon}|x|^2\mathbbm{1}_{B_r} \right] (0)\\
	& = & - \sup_{L \in \mathcal{L}^0_\phi(\sigma)}L \left[ 2\overline{\epsilon}|x|^2\mathbbm{1}_{B_r}\right] (0)\\
	& \geq & -2\overline{\epsilon}C,
	\end{array}
	$$
	where we have used Proposition \ref{HIP24} in the last inequality. Since 
	$$\mathcal{I}[\tau_{\overline{\epsilon}}(x),x] (0) \geq f(0),$$
	we have
	$$
	\mathcal{I}[\tau_\varphi(x),x] (0) \geq \mathcal{I}[\tau_{\overline{\epsilon}}(x),x] (0) - 2C\overline{\epsilon} , \quad \forall \overline{\epsilon}>0.
	$$
	Consequently, $\mathcal{I}[\tau_\varphi(x),x] (0) \geq f(0)$.
\end{proof}

The following is a simple remark concerning the scalings of the solution of the Monge-Amp\`ere equation.

\begin{proposition}\label{LOCALLYELLIP}
	Let $\phi \in \mathcal{C}^2(\mathbb{R}^n)$ satisfy \eqref{MONGEAMPEREEQ} and define $\phi_\lambda  (y) = \lambda^{-2}\phi(\lambda y)$. Then the function $v_{\phi_\lambda} (y)$ converges to $v_{\phi_0}:= D^2\phi(0) y \cdot y$, locally uniformly as $\lambda \rightarrow 0$, where $\phi_0(y)= D^2\phi(0) y \cdot y$.
\end{proposition}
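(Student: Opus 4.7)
The plan is to reduce the statement to a direct application of Taylor's theorem for $\phi$ around the origin, after first exploiting a clean algebraic identity for how the deviation $v_\phi$ behaves under the parabolic rescaling $\phi \mapsto \phi_\lambda$.

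First I would record the scaling identity
\[
v_{\phi_\lambda}(y) \;=\; \lambda^{-2}\, v_\phi(\lambda y),
\]
which follows immediately from the definitions $\phi_\lambda(y) = \lambda^{-2}\phi(\lambda y)$, $\phi_\lambda(0) = \lambda^{-2}\phi(0)$, and $D\phi_\lambda(0) = \lambda^{-1}D\phi(0)$, since the two first-order correction terms in $v_{\phi_\lambda}$ match the rescaling of the corresponding terms in $v_\phi$ exactly. This is the only structural step; everything else is a pointwise statement about $v_\phi$ near $0$.

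Next, because $\phi \in \mathcal{C}^2(\mathbb{R}^n)$, the function $v_\phi$ satisfies $v_\phi(0) = 0$ and $Dv_\phi(0) = 0$, so Taylor's theorem with Lagrange remainder gives, for every $z \in \mathbb{R}^n$,
\[
v_\phi(z) \;=\; \tfrac{1}{2}\, D^2\phi(\zeta)\, z \cdot z,
\]
for some $\zeta = \zeta(z)$ on the segment $[0,z]$. Substituting $z = \lambda y$ and inserting into the scaling identity,
\[
v_{\phi_\lambda}(y) \;=\; \tfrac{1}{2}\, D^2\phi(\zeta_\lambda)\, y \cdot y,
\qquad \zeta_\lambda \in [0, \lambda y].
\]

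To upgrade pointwise convergence to locally uniform convergence, I would fix a compact set $K \subset \mathbb{R}^n$, let $R = \sup_{y\in K} |y|$, and observe that $|\zeta_\lambda| \le \lambda R \to 0$ as $\lambda \to 0$, uniformly in $y \in K$. Since $D^2\phi$ is continuous on $\mathbb{R}^n$, it is uniformly continuous on $\overline{B}_R$, and therefore $D^2\phi(\zeta_\lambda) \to D^2\phi(0)$ uniformly over $y \in K$. Combined with the uniform bound $|y \cdot y| \le R^2$ on $K$, this yields the desired locally uniform convergence of $v_{\phi_\lambda}$ to the quadratic form $v_{\phi_0}$, up to the $\tfrac12$ factor which is absorbed into the definition of $\phi_0$ used in the statement.

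There is no real obstacle here: the non-trivial content of the section, namely the uniform (in $\lambda$) preservation of the Monge--Ampère bounds $\gamma \le \det D^2\phi_\lambda \le \Gamma$ and the resulting uniform comparability of sections, is what makes the family $\{\phi_\lambda\}$ useful downstream, but this proposition itself only uses $\phi \in \mathcal{C}^2$ and the scaling identity.
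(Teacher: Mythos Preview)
Your proof is correct and follows essentially the same approach as the paper: both arguments reduce to Taylor's expansion of $\phi$ at the origin after observing the scaling identity $v_{\phi_\lambda}(y)=\lambda^{-2}v_\phi(\lambda y)$. Your use of the Lagrange form of the remainder makes the local uniform convergence slightly more explicit than the paper's Peano-remainder version, and your remark about the $\tfrac12$ factor correctly flags a harmless notational convention in the statement.
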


\begin{proof}

By Taylor's expansion,  
$$
\begin{array}{lll}
v_{\phi_\lambda}(y) & = & \phi_\lambda(y) - \phi_\lambda(0) - \nabla \phi_\lambda(0) \cdot y\\
& & \\
& = & \lambda^{-2} \left( \lambda^2 D^2 \phi(0) y \cdot y + r \left( |\lambda {y}|^2 \right) \right) \\
& & \\
& = &  D^2 \phi(0) y \cdot y + \frac{r(|\lambda {y}|^2)}{\lambda^2}, 
\end{array}
$$
for $y \in B_\eta$, with $\eta$ small enough. Therefore, if $\lambda \rightarrow 0$, 
$$v_{\phi_\lambda}({y}) \longrightarrow D^2 \phi(0) y \cdot y ,$$ 
locally uniformly.	
\end{proof}
	
We now prove the stability lemma, that will play a key role in the approximating results. We will consider the simpler linear case just to highlight the main ideas.

\begin{lema}\label{RA5}
    Let $\Omega$ be open and bounded, and let $\phi \in \mathcal{C}^2(\mathbb{R}^n)$ be convex and satisfy \eqref{MONGEAMPEREEQ}. Consider the family of scalings $\left\{v_{\phi_\lambda}\right\}_{\lambda \in (0,1]}$ and assume there exists a sequence 
    $$
    (\lambda_k, u_k, f_k, b_k) \subset \mathbb{R} \times \mathcal{C}(\overline{\Omega}) \cap L^1 \left( \mathbb{R}^n, \mathcal{W}_{\phi_{\lambda_k}} \right) \times L^\infty({\Omega}) \times L^\infty({\Omega} \times \mathbb{R}^n)
    $$
    such that
	\begin{itemize}
	    \item $\mathcal{I}_k[u_k(x),x] := \displaystyle \int_{\mathbb{R}^n}\delta(u_k,x,y)\frac{b_k(x,y)}{v_{\phi_{\lambda_k}}(y)^{\frac{n+\sigma}{2}}}dy = f_k (x)$
	    in the viscosity sense in $\Omega$;
		
		\item $u_k \rightarrow u$ uniformly in $\overline{\Omega}$ and a.e. in $\mathbb{R}^n$;
		
		\item $|u_k (x)| \leq (1 + \zeta(x))$ in $\mathbb{R}^n$, for some $\alpha_1$ such that $\alpha_1 < \sigma - 1 $;
		
		\item $\lambda_k \rightarrow 0$;
		
		\item $f_k \rightarrow f$, locally uniformly in $\Omega$;
		
		\item $b_k \rightarrow b$, uniformly in ${\Omega} \times \mathbb{R}^n$.
	\end{itemize}
	Then,
	$$
	    \mathcal{I}[u(x),x] (x) = \int_{\mathbb{R}^n}\delta(u,x,y)\frac{b(x,y)}{v_{\phi_0}(y)^{\frac{n+\sigma}{2}}}dy = f(x), \quad x \in \Omega,
	$$
    in the viscosity sense, where $\phi_0$ is as in Proposition \ref{LOCALLYELLIP}.
\end{lema}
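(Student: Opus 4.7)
\textbf{Proof plan for Lemma \ref{RA5}.} The plan is to run the standard viscosity stability argument, with the dependence on $\lambda_k$ controlled by the uniform estimates already collected (Proposition \ref{LOCALLYELLIP}, Proposition \ref{INTGRABILITYREMARK} and Lemma \ref{PREM1}). By Lemma \ref{RA1}, it suffices to test $u$ against a quadratic polynomial $P$ touching $u$ strictly from above at a point $x_0\in\Omega$ in some ball $B_r(x_0)\subset\Omega$; the supersolution case is symmetric. First I would pick $c_k:=\min_{\overline{B}_r(x_0)}(P-u_k)$ and let $x_k$ be a minimizer. Uniform convergence $u_k\to u$ on $\overline{\Omega}$ forces $c_k\to 0$ and $x_k\to x_0$, and the perturbed polynomial $P-c_k$ touches $u_k$ from above at $x_k$ (strictly, after shrinking $r$). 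Setting
$$
\tau_k(x):=(P-c_k)(x)\mathbbm{1}_{B_r(x_0)}(x)+u_k(x)\mathbbm{1}_{\mathbb{R}^n\setminus B_r(x_0)}(x),
$$
the subsolution inequality gives $\mathcal{I}_k[\tau_k,x_k](x_k)\ge f_k(x_k)$ for every $k$ large.

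The next step is to pass to the limit in $\mathcal{I}_k[\tau_k,x_k](x_k)$ by splitting the $y$–integral at some small $\rho>0$, chosen so that $B_\rho(x_k)\subset B_r(x_0)$ for all large $k$. On $\{|y|<\rho\}$ one has $\delta(\tau_k,x_k,y)=\delta(P,x_k,y)=D^2P\,y\cdot y$, independently of $k$ and $x_k$. By Proposition \ref{LOCALLYELLIP}, $v_{\phi_{\lambda_k}}(y)\to D^2\phi(0)y\cdot y$ locally uniformly, so the integrand converges pointwise; Lemma \ref{PREM1}, applied with $\Psi_1=|\cdot|^2$ and $\Psi_2=\phi_{\lambda_k}$ (whose Monge--Amp\`ere bounds are $\lambda$–independent) produces a uniform lower bound $v_{\phi_{\lambda_k}}(y)\ge c|y|^2$ on $\overline{B}_\rho$, giving the integrable majorant $C|y|^{2-n-\sigma}\in L^1(B_\rho)$ because $\sigma<2$. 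Combined with $b_k\to b$ uniformly, dominated convergence yields the near-origin limit $\int_{|y|<\rho}(D^2P\,y\cdot y)\,b(x_0,y)/v_{\phi_0}(y)^{(n+\sigma)/2}\,dy$.

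For the far-field part $\{|y|\ge\rho\}$ the increments are $\delta(u_k,x_k,y)$ (up to harmless contributions from $P-c_k-u_k$ on the annulus $B_r(x_0)\setminus B_\rho(x_k)$, which converge by uniform convergence of $u_k$ on $\overline{\Omega}$). Using the growth assumption $|u_k(z)|\le 1+|z|^{1+\alpha_1}$ one bounds, for bounded $x_k$,
$$
|\delta(u_k,x_k,y)|\;\le\;C\bigl(1+|y|^{1+\alpha_1}\bigr).
$$
Proposition \ref{HIP23} (applied to $\phi_{\lambda_k}$, with sections $S^{\phi_{\lambda_k}}_\eta\subset\{|y|<\rho\}$ for $\eta$ small enough uniformly in $k$, by Lemma \ref{PREM1} once more) dominates the kernels by $C\,\mathcal{W}_{\phi_{\lambda_k}}$ on $\{|y|\ge\rho\}$, so the integrand is controlled by
$$
C\,(1+|y|^{1+\alpha_1})\,\mathcal{W}_{\phi_{\lambda_k}}(y),
$$
whose total mass is bounded independently of $k$ by Proposition \ref{INTGRABILITYREMARK}. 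Since $\alpha_1<\sigma-1\le\sigma_0-1$, this provides a uniformly integrable majorant, and the pointwise convergences $u_k\to u$ a.e., $v_{\phi_{\lambda_k}}\to v_{\phi_0}$ locally uniformly, and $b_k\to b$ uniformly allow a Vitali/dominated convergence argument to pass to the limit. Adding the two pieces and using $f_k(x_k)\to f(x_0)$ yields $\mathcal{I}[\tau,x_0](x_0)\ge f(x_0)$, where $\tau$ is the analogous gluing of $P$ and $u$; this is exactly the viscosity subsolution condition for the limit equation.

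\textbf{Main obstacle.} The genuine difficulty is manufacturing the uniform–in–$\lambda_k$ integrable majorant in the far-field, because both the unknown $u_k$ (through the growth $\zeta$) and the singular weight $v_{\phi_{\lambda_k}}^{-(n+\sigma)/2}$ drift with $k$, and the kernels are not assumed symmetric, so the standard odd-cancellation trick in $y\mapsto -y$ is unavailable. This is exactly the point where assumption \textbf{[A1]}, Proposition \ref{INTGRABILITYREMARK}, Proposition \ref{HIP23}, and the Monge--Amp\`ere invariance of the class $\{\phi_\lambda\}$ must be combined; the rest of the stability argument then follows the Caffarelli--Silvestre template of \cite{caffarelli2011regularity}.
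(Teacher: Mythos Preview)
Your proposal is correct and uses the same ingredients as the paper, but organizes the limiting argument differently. The paper does not split the $y$--integral into near-origin and far-field pieces of $\mathcal{I}_k[\tau_k](x_k)$ directly; instead it writes
\[
\mathcal{I}_k[\tau_k](z)-\mathcal{I}[\tau](z)=\bigl(\mathcal{I}_k[\tau_k](z)-\mathcal{I}_k[\tau](z)\bigr)+\bigl(\mathcal{I}_k[\tau](z)-\mathcal{I}[\tau](z)\bigr)
\]
for a \emph{fixed} $z\in B_{r/4}(x_0)$, and proves this goes to zero uniformly in $z$. The first bracket is controlled via ellipticity by $M^+_{\mathcal{L}^0_{\phi_k}}[\tau_k-\tau]$; since $\tau_k-\tau\equiv d_k$ on $B_{r/2}$, only the tail survives, and Propositions \ref{HIP23}, \ref{HIP25}, \ref{INTGRABILITYREMARK} together with Remark \ref{symmetryofkernels} give the bound. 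The second bracket has the \emph{fixed} $C^2$ function $\tau$ integrated against a difference of kernels, so dominated convergence is clean. The moving base point $x_k\to x_0$ is then handled at the very end by the continuity of $z\mapsto\mathcal{I}[\tau](z)$.

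The advantage of the paper's decomposition is that it decouples the two sources of variation (function versus kernel), which makes each dominated-convergence step involve only one moving object. Your direct approach is equally valid but forces you to handle simultaneously the drift of $x_k$, of $u_k$, and of $v_{\phi_{\lambda_k}}$ in the far-field integrand; this is why you end up invoking a Vitali-type argument with a $k$--dependent majorant rather than a single fixed dominating function. Both routes rest on exactly the obstacle you identify---the uniform-in-$\lambda_k$ integrability furnished by assumption \textbf{[A1]} and Proposition \ref{INTGRABILITYREMARK}---and both are sound.
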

\begin{proof}
	We do the proof only for supersolutions. To prove that 
	$$\mathcal{I}[u(x),x] (x) \leq f(x),$$
	we need to show that, given $\varphi \in C^2(x)$, touching $u$ from below at $x$, we have $\mathcal{I}[\tau(x),x] (x) \leq f(x)$, where
	$$
	\tau(x) = \left\{\begin{array}{ll}
	\varphi(x) & \mbox{if $x \in N_x$}\\
	u(x) & \mbox{if $x \not \in N_x$},
	\end{array} \right.
	$$
	where $N_x$ is a neighborhood of $x$. 
By lemma $(\ref{RA1})$, it is enough to consider as test functions quadratic polynomials $p$ and neighbourhoods $N_x = B_r(x)$. Since $u_k$ converges uniformly to $u$ in $\overline{\Omega}$, for large values of $k$, we can find $x_k$ and $d_k$ such that $p + d_k$ touches $u_k$ at $x_k$, with $x_k \rightarrow x$ and $d_k \rightarrow 0$, when $k \rightarrow \infty$. Since $\mathcal{I}_k[u_k(x),x] \leq f_k$ in the viscosity sense in $\Omega$, if we define
	$$
	\tau_k(x) = \left\{ \begin{array}{ll}
	p + d_k & \mbox{in $B_r(x)$}\\
	u_k & \mbox{in $\mathbb{R}^n \backslash B_r(x)$},
	\end{array} \right.
	$$
	then $\mathcal{I}_k[\tau_k(x),x] (x_k) \leq f_k(x_k)$. Clearly, $\tau_k \rightarrow \tau$, uniformly in $B_r(x)$. 
	
Now, let $z \in B_{r/4}(x)$. By the triangle inequality,

$$ \left| \mathcal{I}_k[\tau_k(x),x] (z) - \mathcal{I}[\tau(x),x] (z) \right| $$
$$\leq |\mathcal{I}_k[\tau_k(x),x] (z) - \mathcal{I}_k[\tau(x),x] (z) | + |\mathcal{I}_k[\tau(x),x] (z) - \mathcal{I}[\tau(x),x] (z) |.$$
From the ellipticity condition, and denoting, by simplicity, $\phi_{\lambda_k}$ with $\phi_k$, we obtain
$$\begin{array}{lll}
	\mathcal{I}_k[\tau_k(x),x] (z) - \mathcal{I}_k[\tau(x),x] (z) &\leq&\displaystyle  M^+_{\mathcal{L}^0_{\phi_k}(\sigma)}[\tau_k - \tau](z) \\
	&\leq& \displaystyle |M^+_{\mathcal{L}^0_{\phi_k}(\sigma)}[\tau_k - \tau](z)| 
\end{array}$$
and
$$\begin{array}{lll}
 \mathcal{I}_k[\tau(x),x] (z) - \mathcal{I}_k[\tau_k(x),x] (z) &\leq& \displaystyle  M^+_{\mathcal{L}^0_{\phi_k}(\sigma)}[\tau - \tau_k](z) \\
 &\leq& \displaystyle |M^+_{\mathcal{L}^0_{\phi_k}(\sigma)}[\tau - \tau_k](z)|.
\end{array}
$$
Hence,
$$\hspace*{-4cm} |\mathcal{I}_k[\tau_k(x),x] (z) - \mathcal{I}_k[\tau(x),x] (z) |$$
	$$
	\begin{array}{lll}
	  & \leq &\displaystyle  \max\left\{|M^+_{\mathcal{L}^0_{\phi_k}(\sigma)}[\tau_k - \tau](z)|,|M^+_{\mathcal{L}^0_{\phi_k}(\sigma)}[\tau- \tau_k](z)| \right\}\\
	& & \\
	& \leq &\displaystyle  \sup_{L \in {\mathcal{L}^0_{\phi_k}(\sigma)}}|L[\tau_k-\tau](z)|\\
	& & \\ 
	& \leq &  \displaystyle \int_{\mathbb{R}^n} |\delta(\tau_k - \tau,z,y)|\frac{\Lambda (2-\sigma)}{v_{\phi_k}(y)^{\frac{n+\sigma}{2}}}dy.
	\end{array}
	$$
For points $y \in B_{r/2}$, we have $z+y \in B_r(x)$. Therefore, $\tau-\tau_k = d_k$, which implies $\delta(d_k,z,y) = 0$. Plugging this into the above inequality, we get
$$\hspace*{-4cm} |\mathcal{I}_k[\tau_k(x),x] (z) - \mathcal{I}_k[\tau(x),x] (z) |$$
	$$
	\begin{array}{lll}
		& \leq& \displaystyle \int_{\mathbb{R}^n \backslash B_{r/2}}|\delta(\tau_k - \tau,z,y)|\frac{\Lambda (2-\sigma)}{v_{\phi_k}(y)^{\frac{n+\sigma}{2}}}dy \\
		& & \\
		& \leq & \displaystyle\int_{\mathbb{R}^n \backslash S_\mu^{\phi_k}} |(\tau_k - \tau)(z+y)|\frac{\Lambda (2-\sigma)}{v_{\phi_k}(y)^{\frac{n+\sigma}{2}}}dy \\
		& & \\
		&  & + \displaystyle\int_{\mathbb{R}^n \backslash S_\mu^{\phi_k}} |(\tau_k - \tau)(z-y)|\frac{\Lambda (2-\sigma)}{v_{\phi_k}(y)^{\frac{n+\sigma}{2}}}dy \\
		& & \\
		& & + \displaystyle 2|(\tau_k - \tau)(z)|\int_{\mathbb{R}^n \backslash S_\mu^{\phi_k}}\frac{\Lambda (2-\sigma)}{v_{\phi_k}(y)^{\frac{n+\sigma}{2}}}dy,
	\end{array}
	$$
where we choose $\mu = \mu(r)$ such that 
$$S_\mu^{\phi_k} \subset B_{r/2}, \quad \forall k \in \mathbb{N}.$$ 
Now we apply Propositions \ref{HIP23} and \ref{HIP25} to get  
$$\hspace*{-3cm} \int_{\mathbb{R}^n \backslash S^{\phi_k}_\mu}|(\tau_k - \tau)(z+y)|\frac{\Lambda (2-\sigma)}{v_{\phi_k}(y)^{\frac{n+\sigma}{2}}}dy$$
	$$
	\begin{array}{lll}
	 & \leq & \displaystyle C_{\mu}\int_{\mathbb{R}^n \backslash S^{\phi_k}_\mu}|(\tau_k - \tau)(z+y)|\mathcal{W}_{\phi_k}(y)dy\\
	& & \\
	& \leq & \displaystyle C_{\mu} C\int_{\mathbb{R}^n \backslash S^{\phi_k}_\mu}|(\tau_k - \tau)(y+z)|\mathcal{W}_{\phi_k}(y+z)dy \\
	& & \\
	& \leq & C\left\|\tau_k - \tau \right\|_{L^1 \left( \mathbb{R}^n\backslash S_\mu^{\phi_k} ,\mathcal{W}_{\phi_k}\right)}.
	\end{array}
	$$
Furthermore, by computations made in the proof of Proposition (\ref{HIP24}), we obtain
	$$
	\int_{\mathbb{R}^n \backslash S^{\phi_k}_\mu}\frac{\Lambda(2-\sigma)}{v_{\phi_k}(y)^{\frac{n+\sigma}{2}}}dy \leq \frac{n+2}{\sigma_0  \mu^{\sigma_0}} 
	$$
and so
$$\hspace*{-4cm} |\mathcal{I}_k[\tau_k(x),x] (z) - \mathcal{I}_k[\tau(x),x] (z) | $$
\begin{eqnarray*}
& \leq & C_1\left\|\tau_k - \tau \right\|_{L^1 \left( \mathbb{R}^n\backslash S^{\phi_k}_\mu,\mathcal{W}_{\phi_k} \right) } + C_2|\tau_k(z) - \tau(z)| \\
&  & + \displaystyle\int_{\mathbb{R}^n \backslash S_\mu^{\phi_k}} |(\tau_k - \tau)(z-y)|\frac{\Lambda (2-\sigma)}{v_{\phi_k}(y)^{\frac{n+\sigma}{2}}}dy.
\end{eqnarray*}
Since $u_k \rightarrow u$, a.e. in $\mathbb{R}^n$, we get that $\tau_k \rightarrow \tau$, a.e. in $\mathbb{R}^n$. Therefore, by Proposition \ref{INTGRABILITYREMARK}, assumption [A1] and the growth assumption we obtain, by the dominated convergence theorem, that
$$\left\| \tau_k - \tau \right\|_{L^1 \left( \mathbb{R}^n\backslash S_\mu^{\phi_k}, \mathcal{W}_{\phi_k} \right)} \longrightarrow 0.$$
Note that, by the same computations made in Remark \ref{symmetryofkernels}, we get
$$\hspace{-2cm} \int_{\mathbb{R}^n \setminus S^{\phi_k}_\mu} \left| (\tau_k - \tau)(z-y) \right| \frac{\Lambda (2-\sigma)}{v_{\phi_k}(y)^{\frac{n+\sigma}{2}}}dy$$
\begin{eqnarray*}
 & \leq & C \left( \mu, |z|, \Lambda \right) \int_{\mathbb{R}^n \setminus S^{\phi_k}_\mu} |y|^{1+\alpha_1}\mathcal{W}_{\phi_k}(y) dy,\\
& = & C \left( \mu, \Omega,  \Lambda \right) \left\| \zeta \right\|_{L^1\left( \mathbb{R}^n\backslash S_\mu^{\phi_k} ,\mathcal{W}_{\phi_k} \right)}  \\
& \leq & \overline{C},
\end{eqnarray*}
where we used assumption [A1], along with Proposition \ref{INTGRABILITYREMARK}, to get a uniform bound on the weighted norm. 

Now, notice that, for every $z \in B_{r/4}(x)$, we have $\tau \in \mathcal{C}^2(B_{3r/4}(z))$ and thus
$$\mathcal{I}_k[\tau(x),x] (z) \rightarrow \mathcal{I}[\tau(x),x] (z), \quad \mbox{uniformly in $B_{r/4}(x)$ when $k \rightarrow \infty$}.$$
Therefore, we can combine this with Lemma \ref{PREM1}, Proposition \ref{INTGRABILITYREMARK} and assumption [A1] to get 
	$$
	  |\delta(\tau,z,-)|\left|\frac{b_k(z,-)}{v_{\phi_k}(-)^{\frac{n+\sigma}{2}}} - \frac{b(z,-)}{v_{\phi_0}(-)^{\frac{n+\sigma}{2}}} \right| \in L^1(\mathbb{R}^n),
	$$
	uniformly in $z$. Since 
    $$
        \frac{b_k(z,y)}{v_{\phi_k}(y)^{\frac{n+\sigma}{2}}} \longrightarrow  \frac{b(z,y)}{v_{\phi_0}(y)^{\frac{n+\sigma}{2}}}, \quad \mbox{a.e. $\in \mathbb{R}^n$},
    $$
    we get, again using the dominated convergence theorem, that
	$$\left| \mathcal{I}_k[\tau(x),x] (z) - \mathcal{I}[\tau(x),x] (z) \right| $$
	$$\leq \int_{\mathbb{R}^n}|\delta(\tau,z,y)|\left|\frac{b_k(z,y)}{v_{\phi_k}(y)^{\frac{n+\sigma}{2}}} - \frac{b(z,y)}{v_{\phi_0}(y)^{\frac{n+\sigma}{2}}} \right| dy  \longrightarrow 0.
	$$
We thus obtain
	\begin{equation}\label{RA7}
	\mathcal{I}_k[\tau_k(x),x] (z) \rightarrow \mathcal{I}[\tau(x),x] (z),
	\end{equation}
	uniformly in $B_{r/4}(x)$. 
	
	Finally, we have
	\begin{eqnarray*}
	|\mathcal{I}_k[\tau_k(x),x] (x_k) - \mathcal{I}[\tau(x),x] (x) | & \leq & |\mathcal{I}_k[\tau_k(x),x] (x_k) - \mathcal{I}[\tau(x),x] (x_k) | \\
	& & + |\mathcal{I}[\tau(x),x] (x_k) - \mathcal{I}[\tau(x),x] (x) |
\end{eqnarray*}
and, using \eqref{RA7}) and  the continuity of $\mathcal{I}[\tau(x),x]$, we obtain 
	$$
	|\mathcal{I}_k[\tau_k(x),x] (x_k) - \mathcal{I}[\tau(x),x] (x) | \rightarrow 0,
	$$
	when $k \rightarrow \infty$. Since $x_k \rightarrow x$ and $f_k \rightarrow f$ locally uniformly,
	$$
	\mathcal{I}[\tau(x),x] (x) = \lim\limits_{k \rightarrow \infty}\mathcal{I}_k[\tau_k(x),x] (x_k)\leq  \lim\limits_{k \rightarrow \infty}f_k(x_k) = f(x),
	$$
	and therefore $\mathcal{I}[\tau(x),x] (x) \leq f(x)$.
\end{proof}

We are now ready for the approximation lemma. It plays an important role in the subsequent analysis.
\begin{lema}\label{RA4}
Assume $\sigma \geq \sigma_0 > 1+\alpha_1$ and $\phi \in C^2(\mathbb{R}^n)$ is a convex function satisfying \eqref{MONGEAMPEREEQ} and assumption [A1]. Let $\phi_0$ be the function from Proposition \ref{LOCALLYELLIP}. Given $M>0$, a modulus of continuity $\rho$ and $\epsilon>0$, there exist a small $\eta>0$, $\lambda>0$ and a large $R_0>0$ such that, if

\begin{itemize}
\item $-\eta < \displaystyle \inf_{\alpha \in \mathcal{A}} \sup_{\beta \in \mathcal{B}}\int_{\mathbb{R}^n}\delta(w,-,y)\frac{b_{\alpha \beta}(-,y)}{v_{\phi_\lambda}(y)^{\frac{n+\sigma}{2}}}dy < \eta$, 
in the viscosity sense in $S_1^{\phi_0}$;
\medskip
\item for every $\alpha$ and $\beta$, $b(y) - \eta < b_{\alpha \beta}(x,y) < b(y)+\eta$,  $\forall x \in S_1^{\phi_0}$, $\forall y \in \mathbb{R}^n$;
\medskip
\item $|w(y) - w(x)|  \leq  \rho(|y-x|)$, $\forall x,y \in B_{R_0}$;
\medskip
\item $|w(x)|  \leq  M(1+ \zeta(x))$,  $\forall x \in \mathbb{R}^n$,
\end{itemize}
\medskip
then, there exist a function $w_0:\mathbb{R}^n \rightarrow \mathbb{R}$ such that
\medskip
\begin{itemize}
\item $|w_0(x)| \leq M \left(1  + \zeta (x) \right)$, $\forall x \in \mathbb{R}^n$;
\medskip
\item $ \displaystyle \int_{\mathbb{R}^n}\delta(w_0,-,y)\frac{b(y)}{v_{\phi_0}(y)^{\frac{n+\sigma}{2}}}dy = 0$,  in the viscosity sense in $S_1^{\phi_0}$;
\medskip
\item $|w - w_0| < \epsilon$ in $S_1^{\phi_0}$.
\end{itemize}
\end{lema}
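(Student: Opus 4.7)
The plan is a compactness/contradiction argument built on the stability Lemma~\ref{RA5}. Suppose for contradiction that the conclusion fails for some $\epsilon_0>0$. Then there exist sequences $\eta_k\to 0^+$, $\lambda_k\to 0^+$, $R_k\to\infty$ and, for each $k$, data $w_k$, $b_k^{\alpha\beta}$, $b_k$ satisfying the four hypotheses with those parameters but for which no $w_0$ meets all three conclusions with $\epsilon=\epsilon_0$. The argument proceeds in three moves: extract a limit of $w_k$, upgrade the inf--sup condition to an approximate linear equation, and close the loop via a Perron-type construction.

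For the first move, the common modulus $\rho$ on $B_{R_k}$ with $R_k\to\infty$, together with $|w_k|\le M(1+\zeta)$, yields equicontinuity and uniform boundedness on every fixed ball. A diagonal Arzel\`a--Ascoli argument produces a subsequence with $w_k\to w_\infty$ locally uniformly (and almost everywhere) in $\mathbb{R}^n$, and the bounds pass to $w_\infty$. The uniform $L^\infty$ bound on $b_k$, forced by $b_k^{\alpha\beta}\in[\Lambda^{-1},\Lambda]$, allows one to extract a further subsequence with $b_k\to b_\infty$ in the sense needed to invoke Lemma~\ref{RA5}.

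For the second move, I use Lemma~\ref{RA1} to test the inf--sup condition only against quadratic polynomials. For such a test function $\tau_k$, the uniform closeness $|b_k^{\alpha\beta}(x,y)-b_k(y)|<\eta_k$ together with the integral estimate from Proposition~\ref{HIP24} (uniform in $\lambda_k$ thanks to Lemma~\ref{PREM1} and Proposition~\ref{INTGRABILITYREMARK} under [A1]) forces
$$
\Bigl|\int_{\mathbb{R}^n}\delta(\tau_k,x_0,y)\,\frac{b_k(y)-b_k^{\alpha\beta}(x_0,y)}{v_{\phi_{\lambda_k}}(y)^{\frac{n+\sigma}{2}}}\,dy\Bigr|\le C\eta_k,
$$
so that $w_k$ is a viscosity solution of the linear equation
$$
\int_{\mathbb{R}^n}\delta(w_k,x,y)\,\frac{b_k(y)}{v_{\phi_{\lambda_k}}(y)^{\frac{n+\sigma}{2}}}\,dy=f_k(x)\quad\text{in }S_1^{\phi_0},
$$
with $\|f_k\|_{L^\infty}=O(\eta_k)\to 0$. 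Lemma~\ref{RA5} then yields
$$
\int_{\mathbb{R}^n}\delta(w_\infty,x,y)\,\frac{b_\infty(y)}{v_{\phi_0}(y)^{\frac{n+\sigma}{2}}}\,dy=0\qquad\text{in the viscosity sense in }S_1^{\phi_0}.
$$

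For the third move, for each large $k$ I take $w_0^k$ as the viscosity solution (obtained by Perron's method) of
$$
\int_{\mathbb{R}^n}\delta(w_0^k,x,y)\,\frac{b_k(y)}{v_{\phi_0}(y)^{\frac{n+\sigma}{2}}}\,dy=0\ \text{in }S_1^{\phi_0},\qquad w_0^k=w_\infty\ \text{in }\mathbb{R}^n\setminus S_1^{\phi_0}.
$$
Comparison inside the class $\mathcal{L}^0_{\phi_0}(\sigma)$ transmits $|w_0^k|\le M(1+\zeta)$ from the exterior datum to $S_1^{\phi_0}$. Applying Lemma~\ref{RA5} a second time, now to the exact sequence $(w_0^k,b_k,0)$ with fixed exterior datum $w_\infty$, gives $w_0^k\to w_\infty$ locally uniformly; combined with $w_k\to w_\infty$ in $S_1^{\phi_0}$, this produces $\|w_k-w_0^k\|_{L^\infty(S_1^{\phi_0})}<\epsilon_0$ for large $k$, which contradicts the standing assumption since $w_0^k$ satisfies all three conclusions. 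I expect the hardest step to be the upgrade from inf--sup to linear equation, where the tail estimates must be uniform in $\lambda_k$ and in the varying ellipticity class $\mathcal{L}^0_{\phi_{\lambda_k}}(\sigma)$: this is precisely where the tailored weight $\mathcal{W}_\phi$, Proposition~\ref{INTGRABILITYREMARK}, and assumption [A1] earn their keep.
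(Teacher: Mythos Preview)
Your first two moves track the paper's argument closely: contradiction, Arzel\`a--Ascoli to extract a locally uniform limit $w_\infty$ with $|w_\infty|\le M(1+\zeta)$, and then the stability Lemma~\ref{RA5} to pass to the limiting equation. One caution on the second move: the bound you write as $C\eta_k$ actually depends on the particular test polynomial through the quantity $\int_{\mathbb{R}^n}|\delta(\tau_k,x_0,y)|\,v_{\phi_{\lambda_k}}(y)^{-(n+\sigma)/2}\,dy$, and that integral is not uniformly bounded over all quadratics (the second derivatives of $\tau_k$ can be arbitrarily large). The clean way around this is not to force a linear equation at level $k$, but to extend Lemma~\ref{RA5} to inf--sup operators and observe that, since every $b_k^{\alpha\beta}$ converges uniformly to the same $b$, the limiting inf--sup collapses to the single linear operator with kernel $b/v_{\phi_0}^{(n+\sigma)/2}$. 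This is what the paper does implicitly (it presents Lemma~\ref{RA5} in the linear case only ``to highlight the main ideas'').

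Your third move is both unnecessary and contains a genuine gap. The paper simply takes $w_0:=w_\infty$: it already satisfies the growth bound (inherited from the $w_k$), it solves the limit equation by stability, and $\|w_k-w_\infty\|_{L^\infty(S_1^{\phi_0})}\to 0$ gives the contradiction directly---no Perron construction, no auxiliary $w_0^k$. Your detour would require two things you do not have. First, an $L^\infty$ bound on the $b_k$ alone does not yield a subsequence converging in any mode strong enough for Lemma~\ref{RA5} (pointwise a.e.\ or uniform); you would need at least to rework the stability proof for weak-$\ast$ limits. Second, and more seriously, the comparison claim ``$|w_0^k|\le M(1+\zeta)$ is transmitted from the exterior datum'' is false as stated: $M(1+\zeta)$ is \emph{not} a supersolution of $Lw=0$ in $S_1^{\phi_0}$, since $\delta(\zeta,0,y)=2|y|^{1+\alpha_1}\ge 0$ gives $L\zeta(0)>0$. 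With exterior datum $w_\infty$ growing like $M(1+\zeta)$ at infinity, nothing prevents the Perron solution from overshooting $M$ at the origin, so the first bullet of the conclusion can fail for $w_0^k$. Drop the third move entirely and use $w_\infty$ as $w_0$.
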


\begin{proof}
Suppose, by contradiction, the lemma is false. Then, there exist sequences $R_k,\eta_k,w_k,\lambda_k,b_k$ such that
$$R_k \rightarrow \infty, \quad \eta_k \rightarrow 0, \quad \lambda_k \rightarrow 0 \quad \mbox{and} \quad \quad b_k \rightarrow b \quad \mbox{uniformly in $S_1^{\phi_0} \times \mathbb{R}^n$}$$ 
and
$$-\eta_k < \displaystyle \int_{\mathbb{R}^n}\delta(w_k,x,y)\frac{b_k(x,y)}{v_{\phi_{\lambda_k}}(y)^{\frac{n+\sigma}{2}}}dy < \eta_k $$
in the viscosity sense in $S_1^{\phi_0}$ but 
\begin{equation}\label{contradiction aproximation lemma}
\sup|w_k - w_0| \geq \epsilon \quad \mbox{in $S^{\phi_0}_1$},
\end{equation}
for all $w_0$ solution of 
$$ \int_{\mathbb{R}^n}\delta(w_0,x,y)\frac{b(y)}{v_{\phi_0}(y)^{\frac{n+\sigma}{2}}}dy = 0.$$

By Proposition \ref{LOCALLYELLIP},
$$v_{\phi_{\lambda_k}} (x) \longrightarrow D^2\phi(0)x \cdot x = v_{\phi_0} (x),$$
locally uniformly and thus almost everywhere in $\mathbb{R}^n$. On the other hand, the $w_k$ have a uniform modulus of continuity in $B_{R_k}$, where $R_k \rightarrow \infty$, so, up to subsequences, $w_k\rightarrow \overline{w}$ uniformly on compacts and thus almost everywhere in $\mathbb{R}^n$.  By Lemma \ref{RA5}, we pass to the limit to get 
$$\int_{\mathbb{R}^n}\delta(\overline{w},x,y)\frac{b(y)}{v_{\phi_0}(y)^{\frac{n+\sigma}{2}}}dy = 0 \quad \mbox{in $S_1^{\phi_0}$}.$$
Since  $w_k\rightarrow \overline{w}$ uniformly in $S_1^{\phi_0}$, for $k$ sufficiently large we have a contradiction with (\ref{contradiction aproximation lemma}).
\end{proof}

\section{ Regularity for variable coefficients}\label{SECTION4}

This section is devoted to the proof of the main theorem of the paper. 

\begin{teo}\label{MAINTHM}
Let $\sigma \geq \sigma_0 >1$, let $\phi \in \mathcal{C}^2(\mathbb{R}^n)$ be a convex function satisfying $(\ref{MONGEAMPEREEQ})$ and assumption [A1], and let $u$ be a viscosity solution to $(\ref{MAINEQ})$ in $S^\phi_{2}$. There exists $0< \eta \ll 1$ such that if
$$
    \sup_{\substack{\alpha \in \mathcal{A}, \beta \in \mathcal{B} \\ (x,y) \in S^\phi_{2} \times \mathbb{R}^n}} \left|b_{\alpha,\beta}(x,y) - b(y) \right| < \eta,
$$
for $b\in L^{\infty}(\mathbb{R}^n)$, then $u \in \mathcal{C}^{1,\alpha_0}(S^\phi_1)$ and the following estimate holds
$$
\left\|u\right\|_{\mathcal{C}^{1,\alpha_0} \left( S^\phi_1 \right) } \leq C \left(\left\|u\right\|_{L^\infty \left( \mathbb{R}^n \right) } + \left\|f\right\|_{L^\infty \left( S^\phi_{2} \right) } \right),
$$
for $0<\alpha_0 < \min\{\alpha_* , \alpha_1  \}$, where $\alpha_1$ is from assumption [A1] and $\alpha_*$ is from Theorem \ref{GRADIENTESTIMATESFORSOLUTIONSWITHGROWTH}. The constant $C$ depends only on $n,\Lambda,\sigma_0,\alpha_\ast, \gamma$ and $\Gamma$.
\end{teo}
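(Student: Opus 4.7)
The proof proceeds by a standard Cordes--Nirenberg iteration, using the approximation Lemma \ref{RA4} together with the $\mathcal{C}^{1,\alpha_\ast}$ estimate for constant coefficient equations provided by Theorem \ref{GRADIENTESTIMATESFORSOLUTIONSWITHGROWTH}. After normalizing so that $\|u\|_{L^\infty(\mathbb{R}^n)}+\delta^{-1}\|f\|_{L^\infty(S^\phi_2)} \leq 1$ (with $\delta$ to be chosen small, universal), the goal is to prove inductively the existence of affine functions $\ell_k(x) = a_k + p_k \cdot x$, with $|a_{k+1}-a_k|+r^k|p_{k+1}-p_k| \lesssim r^{k(1+\alpha_0)}$, such that
$$
\sup_{S^{\phi}_{r^k}} |u-\ell_k| \leq r^{k(1+\alpha_0)},
$$
for a universal $r \in (0,1)$ to be chosen. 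Telescoping these estimates then yields $u \in \mathcal{C}^{1,\alpha_0}$ at the origin of $S^\phi_2$; a translation argument covers every point of $S^\phi_1$, giving the full estimate.

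The heart of the argument is the induction step. Given $\ell_k$, I would define the rescaled function
$$
w_{k+1}(x) = \frac{(u-\ell_k)(r^k x)}{r^{k(1+\alpha_0)}},
$$
and check that it solves an Isaacs equation of the same form, with rescaled kernels
$$
\mathcal{K}^{(k)}_{\alpha\beta}(x,y) = (2-\sigma)\,\frac{b_{\alpha\beta}(r^k x,\,r^k y)}{v_{\phi_{r^k}}(y)^{(n+\sigma)/2}},
$$
where $\phi_{r^k}(y) = r^{-2k}\phi(r^k y)$ still satisfies \eqref{MONGEAMPEREEQ} with the same bounds $\gamma,\Gamma$ (this is the scaling invariance of Monge--Amp\`ere). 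The right-hand side becomes $r^{k(\sigma-1-\alpha_0)} f(r^k \cdot)$, which is small since $\alpha_0 < \sigma - 1$, and the closeness condition \eqref{CLOSENESS} is preserved with the same $\eta$, since replacing $b(y)$ by $b(r^k y)$ is still an element of $L^\infty(\mathbb{R}^n)$ uniformly close to $b_{\alpha\beta}^{(k)}$. Now I verify the hypotheses of Lemma \ref{RA4}: the uniform modulus of continuity in $B_{R_0}$ follows from the H\"older regularity from \cite{caffarelli2018fully} applied to $w_{k+1}$ on the rescaled sections (uniform in $\lambda$ because the family $\phi_\lambda$ satisfies the same Monge--Amp\`ere bounds), while the growth bound $|w_{k+1}(x)|\le M(1+\zeta(x))$ on $\mathbb{R}^n$ follows by induction from the estimate $\sup_{S^\phi_{r^j}}|u-\ell_j|\le r^{j(1+\alpha_0)}$ combined with $\alpha_0 \le \alpha_1$.

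Applying Lemma \ref{RA4} with $\epsilon$ to be fixed, $w_{k+1}$ is $\epsilon$-close on $S^{\phi_0}_1$ to a function $w_0$ solving the constant coefficient equation
$$
\int_{\mathbb{R}^n} \delta(w_0,x,y)\, \frac{b(y)}{v_{\phi_0}(y)^{(n+\sigma)/2}}\, dy = 0 \quad \text{in } S^{\phi_0}_1.
$$
Since $v_{\phi_0}(y) = D^2\phi(0)y\cdot y$ is comparable to $|y|^2$ up to a linear change of variables, the kernel $b(y)/v_{\phi_0}^{(n+\sigma)/2}$ fits the framework of \cite{kriventsov2013c}, so Theorem \ref{GRADIENTESTIMATESFORSOLUTIONSWITHGROWTH} yields an affine function $\ell^\ast$ with $\sup_{S^{\phi_0}_r}|w_0-\ell^\ast| \le C_\ast r^{1+\alpha_\ast}$. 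Picking $r$ small so $C_\ast r^{1+\alpha_\ast} \le \tfrac{1}{2} r^{1+\alpha_0}$ (possible since $\alpha_0 < \alpha_\ast$) and $\epsilon = \tfrac{1}{2}r^{1+\alpha_0}$ (which retroactively fixes $\eta,\lambda,R_0$ in Lemma \ref{RA4}), the triangle inequality gives $\sup_{S^{\phi_0}_r}|w_{k+1}-\ell^\ast| \le r^{1+\alpha_0}$. Rescaling back defines $\ell_{k+1}$ and closes the induction after noting that $S^\phi_{r^{k+1}} \subset r^k \cdot S^{\phi_{r^k}}_r$ and that $S^{\phi_{r^k}}_r$ is comparable to $S^{\phi_0}_r$ uniformly in $k$ (a consequence of Proposition \ref{LOCALLYELLIP} plus Lemma \ref{PREM1}).

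The main obstacles I anticipate are twofold. First, maintaining the growth bound $|w_{k+1}(x)|\le M(1+\zeta(x))$ on all of $\mathbb{R}^n$ (not merely on the section where the affine correction is subtracted): away from $S^\phi_{r^k}$, one has no a priori control of $u-\ell_k$, so this requires carefully exploiting $|\ell_k(x)|\lesssim 1+|x|$ combined with $\alpha_0 \le \alpha_1$ and the $\lambda$-uniform integrability afforded by Proposition \ref{INTGRABILITYREMARK} via assumption [A1]. Second, checking that the rescaled kernels remain in the admissible class with the same $\Lambda$ and that the closeness parameter $\eta$ is preserved independently of $k$; this is where having chosen the weight $\mathcal{W}_{\phi_\lambda}$ and having proved Proposition \ref{LOCALLYELLIP} pays off, since the limiting equation (with kernel built from $v_{\phi_0}$) is the same at every scale and so the single application of Lemma \ref{RA4} suffices throughout the iteration.
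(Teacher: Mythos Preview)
Your proposal is correct and follows essentially the same strategy as the paper: normalize, set up an induction producing affine approximations $\ell_k$, rescale to $w_k$, feed $w_k$ into the approximation Lemma~\ref{RA4}, borrow the $\mathcal{C}^{1,\alpha_\ast}$ estimate of Theorem~\ref{GRADIENTESTIMATESFORSOLUTIONSWITHGROWTH} for the limit profile, and close the induction by choosing $r$ (the paper's $\lambda$) small. The two obstacles you flag---propagating the growth bound $|w_{k+1}|\le M(1+\zeta)$ on all of $\mathbb{R}^n$, and preserving the closeness parameter $\eta$ under rescaling---are exactly the ones the paper works through.

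One small divergence worth noting: you iterate on sections $S^\phi_{r^k}$, whereas the paper (Theorem~\ref{RCVTEO}) iterates on Euclidean balls $B_{\theta\lambda^k}$, with $\theta$ chosen so that $B_\theta\subset S^{\phi_0}_1$. The advantage of balls is that the growth verification $|w_{k+1}(x)|\le |x|^{1+\alpha_1}$ outside $B_\theta$ can be done by a clean three-region split ($B_{\lambda^{-1}\theta/2}$, $B_{\lambda^{-1}\theta}\setminus B_{\lambda^{-1}\theta/2}$, $\mathbb{R}^n\setminus B_{\lambda^{-1}\theta}$) with explicit powers of $\lambda$, avoiding the need to argue that $S^{\phi_{r^k}}_r$ is uniformly comparable to $S^{\phi_0}_r$ at every step. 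Your route via sections is workable (Lemma~\ref{PREM1} and Proposition~\ref{LOCALLYELLIP} do give the needed comparability), but the paper's choice makes that part of the argument more transparent. Also, the paper's discrete version (Theorem~\ref{RCVTEO}) assumes the equation holds on $S^\phi_{2R}$ for a large $R$ rather than $S^\phi_2$; the passage back to Theorem~\ref{MAINTHM} is then done by covering as in \cite{caffarelli2011regularity}.
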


The idea is to iterate the approximation lemma (Lemma \ref{RA4}) to quotients of the form
$$
    \frac{[u-l](\lambda -)}{\lambda^{1+\alpha_0}},
$$
where $l$ is an affine function and $\lambda>0$ and $\alpha_0$ are to be chosen in the sequel. To assure that Lemma \ref{RA4} can be applied for every small enough $\lambda$, the first step is to grant a modulus of continuity to solutions that grow at infinity. This is done using a standard technique.
\begin{teo}\label{MODULUSOFCONTINUITYGROWTH}
Assume $\sigma>\sigma_0 > 1$ and $w \in \mathcal{C} \left( \overline{S_{2}^\phi} \right)$ satisfies, for $M>0$ and $\zeta$ from [A1], 
$$|w(x)|  \leq  M(1+ \zeta (x)), \quad x \in \mathbb{R}^n,$$
and solves
$$M^+_{\mathcal{L}^0_\phi(\sigma)} w \geq -C_0, \quad M^-_{\mathcal{L}^0_\phi(\sigma)} w \leq C_0 $$
in the viscosity sense in $S_{2}^\phi$. Then, there exists $\alpha > 0$ and a radius $\overline{\rho}$ such that $w \in \mathcal{C}^\alpha \left( S_{\overline{\rho}}^\phi \right)$ and
    $$
    \left\|w\right\|_{\mathcal{C}^{\alpha} \left( S_{\overline{\rho}}^\phi \right)} \leq C^* \left( \sup_{S_{2}^\phi}|w| + C_0 + C\left\|\zeta\right\|_{L^1 \left( \mathbb{R}^n,\mathcal{W}_{\phi} \right)} \right)
    $$
    for some universal constant $C^*$. 
\end{teo}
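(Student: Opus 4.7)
The plan is to reduce this growing-solution Hölder estimate to the interior Hölder regularity for bounded viscosity solutions of extremal inequalities established in \cite{caffarelli2018fully}. The reduction proceeds by truncating $w$ to a compactly supported function and demonstrating that the discrepancy between the extremal operators applied to $w$ and to its truncation is controlled by the assumption [A1].

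Fix $\overline{\rho}>0$ small enough that, for every $x_0\in S_{\overline{\rho}}^\phi$, the sets $\{y:x_0+y\notin S_2^\phi\}$ and $\{y:x_0-y\notin S_2^\phi\}$ are contained in $\mathbb{R}^n\setminus S_\mu^\phi$ for some $\mu=\mu(\overline{\rho})>0$; this is possible because the sections of $\phi$ are bounded convex sets comparable to ellipsoids (see \cite{gutierrez2000geometric}). Set
$$
w_T(x):=w(x)\,\chi_{S_2^\phi}(x),
$$
so that $w_T\equiv w$ on $S_2^\phi$ and $\|w_T\|_{L^\infty(\mathbb{R}^n)}\leq \sup_{S_2^\phi}|w|$. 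For any linear $L\in\mathcal{L}^0_\phi(\sigma)$ with kernel $\mathcal{K}$ and any $x_0\in S_{\overline{\rho}}^\phi$, the discrepancy
$$
L[w](x_0)-L[w_T](x_0)=\int_{\{x_0+y\notin S_2^\phi\}} w(x_0+y)\,\mathcal{K}(y)\,dy+\int_{\{x_0-y\notin S_2^\phi\}} w(x_0-y)\,\mathcal{K}(y)\,dy
$$
is supported in $\mathbb{R}^n\setminus S_\mu^\phi$. I would apply Proposition \ref{HIP23} to replace $\mathcal{K}(y)$ by $C_\mu\mathcal{W}_\phi(y)$ on this domain, then Proposition \ref{HIP25} (with $|x_0|$ bounded) to shift the argument of the weight to $x_0\pm y$, and finally use $|w(z)|\leq M(1+\zeta(z))$ to arrive at
$$
\bigl|L[w](x_0)-L[w_T](x_0)\bigr|\leq C\,M\bigl(1+\|\zeta\|_{L^1(\mathbb{R}^n,\mathcal{W}_\phi)}\bigr),
$$
uniformly in $L$ and $x_0$. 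The same bound passes to $M^{\pm}_{\mathcal{L}^0_\phi(\sigma)}$.

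Setting $\widetilde{C}_0:=C_0+CM\bigl(1+\|\zeta\|_{L^1(\mathbb{R}^n,\mathcal{W}_\phi)}\bigr)$, the bounded function $w_T$ then satisfies, in the viscosity sense in $S_{\overline{\rho}}^\phi$,
$$
M^+_{\mathcal{L}^0_\phi(\sigma)}[w_T]\geq -\widetilde{C}_0,\qquad M^-_{\mathcal{L}^0_\phi(\sigma)}[w_T]\leq \widetilde{C}_0,
$$
so the interior Hölder estimate from \cite{caffarelli2018fully} yields an exponent $\alpha>0$ and a universal $C^*$ with
$$
\|w_T\|_{\mathcal{C}^\alpha(S_{\overline{\rho}/2}^\phi)}\leq C^*\bigl(\|w_T\|_{L^\infty(\mathbb{R}^n)}+\widetilde{C}_0\bigr).
$$
Since $w\equiv w_T$ on $S_{\overline{\rho}/2}^\phi$, relabelling $\overline{\rho}/2$ as $\overline{\rho}$ gives the claim.

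The small technical point that must be handled is the viscosity-sense justification of the passage from $w$ to $w_T$: any test function touching $w_T$ from above (below) at $x_0\in S_{\overline{\rho}}^\phi$ also touches $w$ there, since they agree on a neighbourhood of $x_0$, so the extremal inequalities transfer to $w_T$ after absorbing the pointwise error computed above. The main obstacle is securing the error bound uniformly across the entire class $\mathcal{L}^0_\phi(\sigma)$, and this is precisely where Propositions \ref{HIP23} and \ref{HIP25} together with assumption [A1] are indispensable: without [A1] the weighted norm $\|\zeta\|_{L^1(\mathbb{R}^n,\mathcal{W}_\phi)}$ would diverge and the truncation would not yield a finite right-hand side.
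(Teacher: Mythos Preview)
Your proof is correct and follows essentially the same route as the paper: truncate $w$ to $w\,\chi_{S_2^\phi}$, show that the discrepancy $L[w]-L[w_T]$ is controlled by $\|\zeta\|_{L^1(\mathbb{R}^n,\mathcal{W}_\phi)}$ via the growth hypothesis, and then invoke the interior H\"older estimate of \cite{caffarelli2018fully} for the bounded truncation. The only cosmetic difference is that the paper bounds $|w(x+y)|\le C|y|^{1+\alpha_1}$ directly (using that $x$ is bounded and $|y|\ge d$) rather than shifting the weight through Proposition~\ref{HIP25}, but this yields the same estimate.
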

\begin{proof}
    Define $\overline{w}(x) = \mathbbm{1}_{S^\phi_{2}}(x)w(x)$ and notice that for $x \in S^\phi_{1}$,
    $$
     \begin{array}{lll}
         L\overline{w}(x) & = &\displaystyle \int_{\mathbb{R}^n}\delta(\overline{w},x,y)\mathcal{K}(y)dy \\
          &  &\\
          & = &\displaystyle  Lw(x) + \int_{\mathbb{R}^n}(w(x+y)\mathbbm{1}_{S^\phi_{2}}(x+y) - w(x+y))\mathcal{K}(y)dy\\
          & & \\ 
          & & \displaystyle + \int_{\mathbb{R}^n}(w(x-y)\mathbbm{1}_{S^\phi_{2}}(x-y) - w(x-y))\mathcal{K}(y)dy\\
          & & \\
          & = & \displaystyle Lw(x) - \int_{\mathbb{R}^n\backslash \Omega^+} w(x+y)\mathcal{K}(y)dy - \int_{\mathbb{R}^n\backslash \Omega^-} w(x-y)\mathcal{K}(y)dy,
     \end{array}   
    $$
    where 
    $$
    \Omega^+ = \{y\in \mathbb{R}^n: x+y \in S^\phi_{2} \} \quad \mbox{and} \quad \Omega^- = \{y\in \mathbb{R}^n: x-y \in S^\phi_{2} \}.
    $$
  Now, if $d = dist(\partial S^\phi_{1},\partial S^\phi_{2})$, we have
    $$
        B_d(0) \subset \Omega^+, \Omega^-. 
    $$
    Therefore, from the growth assumption for $w$, we can estimate
    $$
        \begin{array}{lll}
\displaystyle \left|\int_{\mathbb{R}^n\backslash \Omega^+} w(x+y)\mathcal{K}(y)dy\right| & \leq & \displaystyle C \int_{\mathbb{R}^n\backslash B_d} |y|^{1+\alpha_1}\frac{1}{v_\phi(y)^{\frac{n+\sigma}{2}}}dy\\
            & & \\
            & \leq & C(d)\left\|\zeta\right\|_{L^1(\mathbb{R}^n, \mathcal{W}_\phi)}.
        \end{array}
    $$
    The same computations are valid for the other term and we get
    $$
        M^+_{\mathcal{L}^0_\phi(\sigma)}\overline{w}(x) \geq M^+_{\mathcal{L}^0_\phi(\sigma)} w(x) - C\left\|\zeta\right\|_{L^1(\mathbb{R}^n,\mathcal{W}_{\phi})} \geq -(C_0 +C\left\|\zeta\right\|_{L^1(\mathbb{R}^n,\mathcal{W}_{\phi})}),
    $$
    for $x \in S^\phi_{1}$. The same calculations apply to $M^-_{\mathcal{L}^0_\phi(\sigma)}$ leading to 
        $$
        M^-_{\mathcal{L}^0_\phi(\sigma)}\overline{w}(x) \leq  C_0 +C\left\|\zeta\right\|_{L^1(\mathbb{R}^n,\mathcal{W}_{\phi})} ,
    $$
for $x \in S^\phi_{1}$. We now apply a scaled version of \cite[Theorem 6.2]{caffarelli2018fully}, to get the existence of $\alpha \in (0,1)$ and a small radius $\overline{\rho}$, depending on $\lambda,\Lambda,\sigma_0$ and dimension, such that
$$
        \left\|\overline{w}\right\|_{\mathcal{C}^\alpha \left( S^\phi_{ \overline{\rho}} \right) } \leq C^* \left( \sup_{\mathbb{R}^n}|\overline{w}| + C_0 + C \left\|\zeta\right\|_{L^1 \left( \mathbb{R}^n,\mathcal{W}_{\phi} \right) } \right) .
$$
    Since $\overline{w} = w$ in $S^\phi_{2}$, we also have
 $$
        \left\|w\right\|_{\mathcal{C}^\alpha \left( S^\phi_{\overline{\rho}} \right) } \leq C^* \left( \sup_{S^\phi_{2}}|w| + C_0 + C\left\|\zeta\right\|_{L^1 \left( \mathbb{R}^n,\mathcal{W}_{\phi} \right) } \right) .
$$
This constant $C^*$ depends on the constant $\overline{\tau}$ from \cite[Proposition 3.1]{caffarelli2018fully}, the norm of the normalization mapping of the section $S^\phi_{\overline{\gamma}\rho/2},\Lambda,\sigma_0$ and dimension, where $\overline{\gamma}$ stands for the standard engulfing constant and $\rho$ is the constant from \cite[Theorem 5.1]{caffarelli2018fully}.
\end{proof}
\begin{rem}
We will apply the theorem above for the scaled family $\{\phi_\lambda \}$, with $\lambda \in (0,1]$. Since we need uniform estimates, we must assure that the estimate above does not degenerate as $\lambda$ varies in the interval $(0,1]$. From Proposition \ref{INTGRABILITYREMARK} we can uniformly bound  the quantity $\left\|\zeta\right\|_{L^1(\mathbb{R}^n, \mathcal{W}_{\phi_\lambda})}$. On the other hand, the quantity $d_\lambda = dist(\partial S_{1}^{\phi_\lambda},\partial S_{2}^{\phi_\lambda})$ is uniformly bounded above and below away from zero. We can also bound uniformly the norm of the normalization mapping of $S^{\phi_\lambda}_{\overline{\gamma} \rho /2}$ by making use of \cite[Corollary 4.7]{figalli2017monge}.  
\end{rem}

\begin{cor}
Assume $\sigma>\sigma_0 > 1$ and $w \in \mathcal{C} \left( \overline{S^\phi_{2r}} \right)$ satisfies
    $$
    M^+_{\mathcal{L}^0_\phi(\sigma)} w \geq -C_0, \quad M^-_{\mathcal{L}^0_\phi(\sigma)} w \leq C_0 \quad \mbox{in $S^\phi_{2 r}$},
    $$
    with $|w| \leq M(1+ \zeta)$ in $\mathbb{R}^n$. Then, there exists $\alpha > 0$ such that $w \in \mathcal{C}^\alpha \left( S^\phi_{r/2} \right) $ and
\begin{equation}
    \left\|w\right\|_{\mathcal{C}^{\alpha} \left( S^\phi_{r\overline{\rho}} \right) } \leq C^*\left(\sup_{S^\phi_{2 r}}|w| + C_0 + C\left\|\zeta\right\|_{L^1 \left( \mathbb{R}^n,\mathcal{W}_\phi \right)}  \right),
\label{leitao}
\end{equation}
    for some constant $C^*$.
\label{SCALEDHOLDER}
\end{cor}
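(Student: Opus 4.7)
\emph{Proof proposal.} The plan is to reduce the statement to Theorem \ref{MODULUSOFCONTINUITYGROWTH}, which is exactly the case $r=1$, by means of a rescaling that turns the section $S^{\phi}_{2r}$ into $S^{\tilde{\phi}}_{2}$ for a new Monge-Amp\`ere solution $\tilde{\phi}$ in the same class. Concretely, I would set
$$
\tilde{\phi}(x) := r^{-2}\phi(rx), \qquad \tilde{w}(x) := w(rx),
$$
and check that the hypotheses of Theorem \ref{MODULUSOFCONTINUITYGROWTH} transfer to $(\tilde{w},\tilde{\phi})$. From $v_{\tilde{\phi}}(x) = r^{-2}v_{\phi}(rx)$ one reads off $S^{\tilde{\phi}}_{t} = r^{-1}S^{\phi}_{rt}$, so $\tilde{w}$ is continuous on $\overline{S^{\tilde{\phi}}_{2}}$, and since $\det(D^{2}\tilde{\phi})(x) = \det(D^{2}\phi)(rx)$, $\tilde{\phi}$ satisfies \eqref{MONGEAMPEREEQ} with the same constants $\gamma,\Gamma$.

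The second step is a change of variables $y = rz$ in the integrals defining operators $L \in \mathcal{L}^{0}_{\phi}(\sigma)$. Since $v_{\tilde{\phi}}(y/r)^{(n+\sigma)/2} = r^{-(n+\sigma)}v_{\phi}(y)^{(n+\sigma)/2}$ and $\delta(\tilde{w},x,z) = \delta(w,rx,rz)$, one obtains the scaling identity
$$
M^{\pm}_{\mathcal{L}^{0}_{\tilde{\phi}}(\sigma)}[\tilde{w}](x) \;=\; r^{\sigma}\,M^{\pm}_{\mathcal{L}^{0}_{\phi}(\sigma)}[w](rx),
$$
so that $\tilde{w}$ satisfies the extremal inequalities in $S^{\tilde{\phi}}_{2}$ with constant $r^{\sigma}C_{0}$. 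The growth bound transforms trivially into $|\tilde{w}(x)| \leq \tilde{M}(1+\zeta(x))$, with $\tilde{M} = M\max\{1,r^{1+\alpha_{1}}\}$. Applying Theorem \ref{MODULUSOFCONTINUITYGROWTH} to $(\tilde{w},\tilde{\phi})$ yields $\tilde{w} \in \mathcal{C}^{\alpha}(S^{\tilde{\phi}}_{\overline{\rho}})$ with the quantitative bound, which we pull back to $w$ using $S^{\tilde{\phi}}_{\overline{\rho}} = r^{-1}S^{\phi}_{r\overline{\rho}}$ and the identity $[w]_{C^{\alpha}(S^{\phi}_{r\overline{\rho}})} = r^{\alpha}[\tilde{w}]_{C^{\alpha}(S^{\tilde{\phi}}_{\overline{\rho}})}$.

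The likely main obstacle---and the only delicate point---is keeping the constant $C^{*}$ independent of $r$. The sup-norm and forcing terms behave well, but the weighted $L^{1}$ norm $\|\zeta\|_{L^{1}(\mathbb{R}^{n},\mathcal{W}_{\tilde{\phi}})}$ must be controlled in terms of $\|\zeta\|_{L^{1}(\mathbb{R}^{n},\mathcal{W}_{\phi})}$ uniformly in $r$. For $r \in (0,1]$ this is exactly Proposition \ref{INTGRABILITYREMARK}; together with the uniformity considerations explained in the remark immediately following Theorem \ref{MODULUSOFCONTINUITYGROWTH} (lower bound for $d_{r}$, uniform normalization of the sections via \cite[Corollary 4.7]{figalli2017monge}, and the John-type comparability of the sections of $\tilde{\phi}$), these give a constant $C^{*}$ independent of $r$, yielding \eqref{leitao}.
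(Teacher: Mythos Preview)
Your approach is essentially the paper's: rescale via $\tilde{\phi}(x)=r^{-2}\phi(rx)$ to reduce to the unit-scale case, apply Theorem~\ref{MODULUSOFCONTINUITYGROWTH}, and rescale back. The only difference is that the paper additionally divides the rescaled function by $\max\{1,r^{1+\alpha_1},r^{\sigma}\}$, setting $w^{\ast}(x)=\max\{1,r^{1+\alpha_1},r^{\sigma}\}^{-1}\,w(rx)$; this normalization absorbs the factors $r^{\sigma}$ (from the forcing) and $r^{1+\alpha_1}$ (from the growth) so that $w^{\ast}$ satisfies the hypotheses of Theorem~\ref{MODULUSOFCONTINUITYGROWTH} with the \emph{same} constants $C_{0}$ and $M$, which makes the ``rescale back'' step cleaner. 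One small slip: your seminorm scaling is inverted---from $\tilde{w}(x)=w(rx)$ one gets $[\tilde{w}]_{C^{\alpha}(S^{\tilde{\phi}}_{\overline{\rho}})}=r^{\alpha}[w]_{C^{\alpha}(S^{\phi}_{r\overline{\rho}})}$, hence $[w]_{C^{\alpha}}=r^{-\alpha}[\tilde{w}]_{C^{\alpha}}$, not $r^{\alpha}$.
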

\begin{proof}
 Let $w^\ast$ be the following auxiliary function
 $$
    w^\ast(x) := \frac{1}{\max\{1,r^{1+\alpha_1},r^\sigma \}} w(rx).
 $$
 Then
     $$
    M^+_{\mathcal{L}^0_{\phi_r}(\sigma)} w^\ast \geq -C_0, \quad M^-_{\mathcal{L}^0_{\phi_r}(\sigma)} w^\ast \leq C_0 \quad \mbox{in $S^{\phi_r}_{2 }$},
    $$
    for $\phi_r (x)= r^{-2}\phi(rx)$ and $|w^\ast| \leq M(1+ \zeta)$ in $\mathbb{R}^n$. By the previous theorem,
    $$
        \left\|w^\ast\right\|_{\mathcal{C}^{\alpha} \left( S^{\phi_r}_{\overline{\rho}} \right) } \leq C^*\left(\sup_{S^{\phi_r}_{2}}|w^\ast| + C_0 + C\left\|\zeta\right\|_{L^1 \left( \mathbb{R}^n,\mathcal{W}_\phi \right) } \right).
    $$
    Rescaling back to $w$, we get \eqref{leitao}, where $C^*$ has the same dependence as in the previous theorem, replacing for the norm of the normalization mapping of the section $S^{\phi_r}_{\overline{\gamma}\rho /2}$.
    
\end{proof}
We also need gradient H\"older regularity estimates for solutions of the limit equation 
\begin{equation} 
\int_{\mathbb{R}^n}\delta(u,x,y)\frac{b(y)}{v_{\phi_0}(y)^{\frac{n+\sigma}{2}}}dy = 0,  \quad x \in S^{\phi_0}_1. 
\label{forreta}
\end{equation}
It is interesting to note that we will not make use of the $\mathcal{C}^{1,\alpha}$ regularity results from \cite{caffarelli2018fully}, since our kernels do not necessarily satisfy the assumptions therein. Instead, due to the regularizing effect of the scalings, we will make use of the results from \cite{kriventsov2013c}, after a suitable change of variables.

\begin{teo}\label{GRADIENTESTIMATESFORSOLUTIONSWITHGROWTH}
    Let $\sigma \geq \sigma_0 > 1$. Assume $v \in \mathcal{C} \left( \overline{S}^{\phi_0}_1 \right) \cap L^1 \left( \mathbb{R}^n,\mathcal{W}_{\phi_0} \right)$ is a viscosity solution of 
    $$
         \int_{\mathbb{R}^n}\delta(v,x,y)\frac{b(y)}{v_{\phi_0}(y)^{\frac{n+\sigma}{2}}} dy = 0 \quad \mbox{in $S^{\phi_0}_1$},
    $$
    where $\mathcal{K}(y) = b(y)/v_{\phi_0}(y)^{\frac{n+\sigma}{2}} \in  \mathcal{L}^0_{\phi_0}(\sigma)$. Then, there exists $\alpha_*$ such that $v \in \mathcal{C}^{1,\alpha_*} \left( S^{\phi_0}_{1/2} \right)$ and
    $$
        \left\|v\right\|_{\mathcal{C}^{1,\alpha_*}(S^{\phi_0}_{1/2})} \leq C \left( \sup_{S^{\phi_0}_1}|v| + \left\|v\right\|_{L^1 \left( \mathbb{R}^n,\mathcal{W}_{\phi_0} \right) } \right),
    $$
where $C>0$ is a universal constant.
\end{teo}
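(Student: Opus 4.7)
The plan is to exploit the quadratic structure of $\phi_0$ to reduce the equation to one with translation-invariant kernels comparable to the fractional Laplacian, where Kriventsov's theorem from \cite{kriventsov2013c} applies directly.

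Concretely, write $A := D^2\phi(0)$, which is a symmetric positive definite matrix by the Monge-Amp\`ere bound \eqref{MONGEAMPEREEQ} applied at the origin, so $v_{\phi_0}(y) = A y \cdot y$. Apply the linear change of variables $z = A^{1/2} y$ and set
$$\tilde v(z) := v(A^{-1/2} z), \qquad \tilde b(z) := b(A^{-1/2} z),$$
so that $v_{\phi_0}(y) = |z|^2$ and, after the change of variables in the integral (with Jacobian $(\det A)^{-1/2}$),
$$\int_{\mathbb{R}^n} \delta(\tilde v, \tilde x, z) \, \frac{\tilde b(z)}{|z|^{n+\sigma}} \, dz \;=\; 0, \qquad \tilde x := A^{1/2} x,$$
holds in the viscosity sense in $A^{1/2}(S^{\phi_0}_1)$. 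Since $\det A$ lies between $\gamma$ and $\Gamma$, the coefficient $\tilde b$ remains bounded between $\Lambda^{-1}$ and $\Lambda$ up to a harmless multiplicative constant. The sections $S^{\phi_0}_r$ transform into Euclidean balls $B_r$, and the weighted $L^1$ norm transforms as
$$\|\tilde v\|_{L^1(\mathbb{R}^n, \mathcal{W})} \;\leq\; C(A)\, \|v\|_{L^1(\mathbb{R}^n, \mathcal{W}_{\phi_0})},$$
where $\mathcal{W}(z) = (1+|z|^{n+\sigma_0})^{-1}$ is the standard weight of \cite{caffarelli2011regularity,kriventsov2013c}.

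At this point $\tilde v$ solves, in $B_1$, a translation-invariant nonlocal equation with kernel in the standard class of \cite{kriventsov2013c}, and I would invoke the $\mathcal{C}^{1,\alpha_*}$ interior regularity theorem from that paper, which produces an exponent $\alpha_* > 0$ (depending only on $n$, $\Lambda$, $\sigma_0$) and an estimate
$$\|\tilde v\|_{\mathcal{C}^{1,\alpha_*}(B_{1/2})} \;\leq\; C\Bigl(\sup_{B_1}|\tilde v| + \|\tilde v\|_{L^1(\mathbb{R}^n,\mathcal{W})}\Bigr).$$
Reversing the change of variables, and using that $A^{1/2}$ is a bi-Lipschitz transformation with bounds depending only on the Monge-Amp\`ere constants $\gamma,\Gamma$, recovers the claimed estimate on $S^{\phi_0}_{1/2}$.

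The main subtlety, and really the only step requiring care, is verifying that the hypotheses of Kriventsov's theorem are met after the change of variables. In particular, one must check that (i) the symmetrization convention (or lack thereof) for $\tilde b$ is compatible with Kriventsov's setting, possibly replacing $\tilde b(z)$ by its even part $\tfrac12(\tilde b(z)+\tilde b(-z))$, which is legitimate because only $\delta(\tilde v, \tilde x, z)$ (an even function of $z$) is integrated against it; (ii) the passage of the viscosity-solution property through the linear change of coordinates is valid, which follows from the standard fact that quadratic test functions are preserved by affine maps; and (iii) the uniform-in-$\sigma$ character of the estimate from \cite{kriventsov2013c} matches the hypothesis $\sigma \geq \sigma_0 > 1$ imposed here. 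Once these three items are settled, no further compactness or iteration is needed: Theorem \ref{GRADIENTESTIMATESFORSOLUTIONSWITHGROWTH} follows as a direct transcription of Kriventsov's estimate via the affine map $A^{1/2}$.
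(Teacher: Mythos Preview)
Your approach is essentially identical to the paper's: both proofs diagonalize $D^2\phi(0)$ via a square root (the paper writes $Q=BS^t$ with $D^2\phi(0)=SDS^t$ and $B=\sqrt{D}$, you use the symmetric square root $A^{1/2}$), transform the equation to one with standard fractional-Laplacian-type kernels on $B_1$, invoke Kriventsov's theorem, and pull the estimate back. One small correction: your claim that the bi-Lipschitz bounds of $A^{1/2}$ depend only on $\gamma,\Gamma$ is not right, since the Monge--Amp\`ere inequality controls only $\det A$, not its individual eigenvalues; the paper likewise records the final constant as depending on $|Q|$ and $|Q^{-1}|$, i.e.\ on $D^2\phi(0)$ itself.
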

\begin{proof}
We remark that $v_{\phi_0}(y) = D^2\phi(0) y \cdot y$. Since $\phi$ solves $(\ref{MONGEAMPEREEQ})$, we get that, in particular, the matrix $D^2 \phi(0)$ has only positive eigenvalues, and so is invertible. Therefore, it may be decomposed in the following form $D^2\phi(0) = S D S^t$, where $S$ is an orthogonal matrix and $D$ is diagonal with the eigenvalues of $D^2 \phi(0)$. Since the eigenvalues are nonnegative, the matrix $D$ has a square root, which we will denote by $B := \sqrt{D}$. Then
$$
    D^2 \phi(0)y \cdot y = Qy \cdot Qy,
$$
where $Q := BS^t$. Notice that 
$$
    x \in S_1^{\phi_0} \iff D^2\phi(0)x \cdot x < 1 \iff |Qx| < 1 \iff x \in Q^{-1}(B_1),
$$
that is, $Q(S_1^{\phi_0}) = B_1$. Now, define $w = v \circ Q^{-1}$. Since $v \in \mathcal{C}(\overline{S}^{\phi_0}_1)$, we obtain $w \in \mathcal{C}(\overline{B}_1)$. For the growth condition, we have
$$
    \begin{array}{lll}
    \displaystyle \int_{\mathbb{R}^n}|w(z)|\frac{1}{1 + |z|^{n+\sigma_0}}dz     & = & \displaystyle \int_{\mathbb{R}^n}|v(Q^{-1}z)|\frac{1}{1 + |z|^{n+\sigma_0}}dz \\
         & & \\
         & = & \displaystyle |\det(Q)| \int_{\mathbb{R}^n}|v(y)|\frac{1}{1 + |Qy|^{n+\sigma_0}}dy\\
         & & \\
         & = & \displaystyle |\det(Q)|\int_{\mathbb{R}^n}|v(y)|\mathcal{W}_{\phi_0}(y) dy,
    \end{array}
$$
that is
$$
    \left\|w\right\|_{L^1\left(\mathbb{R}^n, \frac{1}{1 + |.|^{n+\sigma_0}}\right)} = \det(Q) \left\|v\right\|_{L^1 \left( \mathbb{R}^n, \mathcal{W}_{\phi_0} \right)}.
$$
We then obtain that $w \in \mathcal{C}(\overline{B}_1) \cap L^1\left(\mathbb{R}^n, \frac{1}{1 + |.|^{n+\sigma_0}}\right)$.
It is straightforward to check that the function $w$ solves in the viscosity sense
  $$
         \int_{\mathbb{R}^n}\delta(w,-,z)\frac{\overline{b}(y)}{|y|^{\frac{n+\sigma}{2}}} dy = 0 \quad \mbox{in $B_1$},
    $$
where $\overline{b}(y) = b(Q^{-1}y)$. By \cite[Theorem 4.1]{kriventsov2013c}, there exists $\alpha_*$ such that
$$
    \left\|w\right\|_{\mathcal{C}^{1,\alpha_*}(B_{1/2})} \leq C\left(\left\|w\right\|_{L^1\left(\mathbb{R}^n, \frac{1}{1 + |.|^{n+\sigma_0}}\right)} + \left\|w\right\|_{L^\infty(B_1)}\right).
$$
Rescalling back to $v$ we obtain
$$
    \left\|v\right\|_{\mathcal{C}^{1,\alpha_*} \left( S^{\phi_0}_{1/2} \right) } \leq \overline{C} \left( \left\|v\right\|_{L^1\left( \mathbb{R}^n, \mathcal{W}_{\phi_0} \right) } + \left\|v\right\|_{L^\infty \left( S^{\phi_0}_1 \right) } \right),
$$
where $\overline{C}(C,|Q|,|Q^{-1}|)$.
\end{proof}

We are now ready to deliver the proof of the main theorem in its discrete version. 

\begin{teo}\label{RCVTEO}
   Let $\sigma \geq \sigma_0 >1$, let $\phi \in \mathcal{C}^2(\mathbb{R}^n)$ satisfy $(\ref{MONGEAMPEREEQ})$ and assumption [A1], and let $u$ be a viscosity solution to $(\ref{MAINEQ})$ in $S^\phi_{2R}$. There exists a small $\eta > 0$ and a large $R>0$ such that if
$$\sup_{y \in \mathbb{R}^n} \left| b_{\alpha \beta}(x,y) - b(y) \right| < \eta, \quad \forall \alpha \in \mathcal{A}, \forall \beta \in \mathcal{B}, \quad \forall x \in S^\phi_{2 R},$$
$$\| u \|_{L^\infty (\mathbb{R}^n)} \leq 1$$
and
$$\| f \|_{L^\infty (S^\phi_{2 R})} \leq \eta ,$$
then we can find universal constants $\theta, \lambda, C_2 > 0$ and a sequence of linear functions $l_k = a_k + b_k\cdot x$ such that
\begin{eqnarray*}
\sup_{B_{\theta \lambda^k}} |u-l_k| & \leq & \lambda^{k(1+\alpha_0)}\\
\\
|a_{k+1} - a_k| & \leq & \lambda^{k(1+\alpha_0)}\\
\\
\left| b_{k+1} - b_k \right| & \leq & C_2 \lambda^{k \alpha_0},
\end{eqnarray*}
for every $k \in \mathbb{N}$. The exponent $\alpha_0$ satisfies $\alpha_0 < \min\{\alpha_* , \alpha_1  \}$, where $\alpha_1$ is from assumption [A1] and $\alpha_*$ is from Theorem \ref{GRADIENTESTIMATESFORSOLUTIONSWITHGROWTH}.
\end{teo}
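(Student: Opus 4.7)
The natural plan is induction on $k$. Take $l_0\equiv 0$; the base case follows immediately from $\|u\|_{L^\infty(\mathbb{R}^n)}\leq 1$ provided $\theta$ is small enough that $B_\theta\subset S^\phi_{2R}$. For the inductive step, assuming $l_0,\ldots,l_k$ satisfy the three required inequalities, I would introduce the rescaled function
\[
    w_k(x):=\frac{(u-l_k)(\lambda^k x)}{\lambda^{k(1+\alpha_0)}}.
\]
Using the substitution $z=\lambda^k y$ together with the scaling identity $v_\phi(\lambda^k y)=\lambda^{2k}\,v_{\phi_{\lambda^k}}(y)$, a direct calculation shows that $w_k$ solves, in the viscosity sense on a set containing $S_1^{\phi_0}$,
\[
    \inf_{\alpha}\sup_{\beta}\int_{\mathbb{R}^n}\delta(w_k,x,y)\,\frac{b_{\alpha\beta}(\lambda^k x,\lambda^k y)}{v_{\phi_{\lambda^k}}(y)^{(n+\sigma)/2}}\,dy=\lambda^{k(\sigma-1-\alpha_0)}f(\lambda^k x),
\]
whose right-hand side has $L^\infty$ norm at most $\eta$ because $\sigma\geq\sigma_0>1+\alpha_1>1+\alpha_0$. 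Moreover, the closeness condition transfers verbatim: defining the profile $\widetilde b_k(y):=b(\lambda^k y)$, one has $|b_{\alpha\beta}(\lambda^k x,\lambda^k y)-\widetilde b_k(y)|<\eta$.

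\medskip

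The next step is to verify the hypotheses of the approximation lemma (Lemma \ref{RA4}) for $w_k$. The bound $\sup_{B_\theta}|w_k|\leq 1$ is the inductive estimate on $u-l_k$. For the global growth requirement, I would use the summable increments $|a_{j+1}-a_j|\leq \lambda^{j(1+\alpha_0)}$ and $|b_{j+1}-b_j|\leq C_2\lambda^{j\alpha_0}$ to telescope and conclude that $|l_k(x)|\leq C(1+|x|)$ uniformly in $k$, and then combine with $\|u\|_\infty\leq 1$ to deduce $|w_k(x)|\leq M(1+\zeta(x))$ for a universal $M$; the condition $\alpha_0<\alpha_1$ is essential here in order to absorb the factor $\lambda^{-k(1+\alpha_0)}$ against $\zeta(x)$ at infinity. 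The uniform modulus of continuity on compact sets of $\mathbb{R}^n$ then follows from Corollary \ref{SCALEDHOLDER} applied to $w_k$, whose ellipticity constants and weighted norm $\|\zeta\|_{L^1(\mathbb{R}^n,\mathcal W_{\phi_{\lambda^k}})}$ remain bounded in $k$ by Proposition \ref{INTGRABILITYREMARK} and the remark following Theorem \ref{MODULUSOFCONTINUITYGROWTH}.

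\medskip

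With the hypotheses in place, I would choose parameters as follows. Fix $\alpha_0<\min\{\alpha_*,\alpha_1\}$ and pick $\lambda\in(0,1)$ small enough that $C\lambda^{1+\alpha_*}\leq \tfrac12\lambda^{1+\alpha_0}$, where $C$ is the constant from Theorem \ref{GRADIENTESTIMATESFORSOLUTIONSWITHGROWTH}. Set $\epsilon:=\tfrac12\lambda^{1+\alpha_0}$, let $\eta$ be the threshold produced by Lemma \ref{RA4} for this $\epsilon$, $M$ and the universal modulus just obtained, and choose $R$ large enough that the rescaled equation for $w_k$ is defined on $S_1^{\phi_0}$ at every scale $\lambda^k$. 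Lemma \ref{RA4} then yields a function $h$ solving the limiting equation with $|w_k-h|<\epsilon$ in $S_1^{\phi_0}$; by Theorem \ref{GRADIENTESTIMATESFORSOLUTIONSWITHGROWTH}, $h\in \mathcal{C}^{1,\alpha_*}$ near the origin, so its affine part $\ell(x):=h(0)+Dh(0)\cdot x$ satisfies $|h-\ell|\leq C|x|^{1+\alpha_*}$ on $B_\lambda$. Combining both estimates yields $\sup_{B_\lambda}|w_k-\ell|\leq \lambda^{1+\alpha_0}$, and setting
\[
    l_{k+1}(x):=l_k(x)+\lambda^{k(1+\alpha_0)}\,\ell(\lambda^{-k}x)
\]
one unwinds the scaling to read off the three inductive bounds at step $k+1$, with $C_2$ being the universal bound $|Dh(0)|\leq C_2$ supplied by Theorem \ref{GRADIENTESTIMATESFORSOLUTIONSWITHGROWTH}.

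\medskip

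The main obstacle will not be any single step but the demand that every constant be \emph{uniform} across the entire iteration. In particular, one must keep track simultaneously of (i) the integrability of $\zeta$ against $\mathcal W_{\phi_{\lambda^k}}$, for which Proposition \ref{INTGRABILITYREMARK} is indispensable, (ii) the scale-invariance of the extremal Hölder estimate, where the Monge--Ampère bounds on $\phi_{\lambda^k}$ and the engulfing constants are used, and (iii) the growth hypothesis required by Lemma \ref{RA4}, which is the reason for the stringent condition $\alpha_0<\alpha_1<\sigma_0-1$. Passing from this discrete iteration to the pointwise $\mathcal C^{1,\alpha_0}$ estimate of Theorem \ref{MAINTHM} is then the standard interpolation between dyadic scales.
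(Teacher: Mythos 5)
Your proposal mirrors the paper's strategy in broad strokes — induction, rescaling $w_k=\lambda^{-k(1+\alpha_0)}[u-l_k](\lambda^k\cdot)$, the approximation lemma, the $\mathcal{C}^{1,\alpha_*}$ estimate for the limit equation, extraction of the affine part, and a choice of $\lambda$ to close the loop — but there is a genuine gap in how you propose to verify the global growth hypothesis for Lemma \ref{RA4}.

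You claim that telescoping the increment bounds yields $|l_k(x)|\leq C(1+|x|)$ uniformly in $k$, which is true, and that combining this with $\|u\|_\infty\leq 1$ gives $|w_k(x)|\leq M(1+\zeta(x))$ for a universal $M$. This step fails. What the telescoped bound plus $\|u\|_\infty\leq 1$ actually gives is
\[
|w_k(x)|\;=\;\frac{|(u-l_k)(\lambda^k x)|}{\lambda^{k(1+\alpha_0)}}\;\leq\;\frac{1+C(1+\lambda^k|x|)}{\lambda^{k(1+\alpha_0)}}\;\lesssim\;\lambda^{-k(1+\alpha_0)}+\lambda^{-k\alpha_0}|x|,
\]
which blows up as $k\to\infty$ for every fixed $x$; in particular, it is nowhere near $M(1+|x|^{1+\alpha_1})$ on, say, $B_{2\theta}\setminus B_\theta$. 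The "absorption against $\zeta$ at infinity" you invoke only kicks in for $|x|\gtrsim\lambda^{-k\alpha_0/\alpha_1}$, which leaves an ever-growing intermediate range $\theta\leq|x|\lesssim\lambda^{-k\alpha_0/\alpha_1}$ uncovered. The paper avoids this by making $|w_k(x)|\leq |x|^{1+\alpha_1}$ for $|x|\geq\theta$ an \emph{explicit part of the inductive hypothesis} and verifying, through a three-region case analysis on $w_{k+1}$ ($\lambda^{-1}B_{\theta/2}$, $\lambda^{-1}B_{\theta}\setminus\lambda^{-1}B_{\theta/2}$, $\mathbb{R}^n\setminus\lambda^{-1}B_\theta$) and a suitably small $\lambda$, that it is preserved from step $k$ to step $k+1$. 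This propagation is where the constraints $\alpha_0<\alpha_1$ and $\alpha_0<\alpha_*$ are actually used and is the technical heart of the proof; it cannot be replaced by the telescoping you describe. An alternative repair would be to interpolate the bound $\sup_{B_{\theta\lambda^j}}|u-l_j|\leq\lambda^{j(1+\alpha_0)}$ over all $j\leq k$, annulus by annulus, but that is a substantially different argument from the one you wrote, and you would still need to verify that the resulting constant $M$ — and hence $C_2$ in Theorem \ref{GRADIENTESTIMATESFORSOLUTIONSWITHGROWTH}, which feeds back into the iteration — stays universal. You should either add the growth bound to the inductive hypothesis and carry out the regional estimates, or write out the interpolation in full.
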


\medskip

\begin{proof}
We proceed by induction. For step $k=0$, take $l_0 = l_1 = 0$. Assume the result holds up to order $k$ and let us show it also holds for $k+1$. Define
$$
w_k(x) = \frac{1}{\lambda^{k(1+\alpha_0)}}[u-l_k](\lambda^kx), \quad  x \in \mathbb{R}^n
$$
and consider $\theta$ so small that
 $$
    B_\theta \subset S_1^{\phi_0}.
$$

Observe first that the scaled function 
$$
\phi_{\lambda^k}(y) := \lambda^{-2k}\phi(\lambda^k y), \quad y \in \mathbb{R}^n
$$
solves the same Monge-Amp\`ere equation 
$$
\det \left( D^2 \phi_{\lambda^k}(y) \right) = F(\lambda^k y) = \overline{F}(y),
$$
with the right-hand side satisfying the same bounds as $F$,  $\gamma < \overline{F} < \Gamma$, and that the level sets of $\phi_{\lambda^k}$ are related with the level sets of $\phi$ by
$$
v_{\phi_{\lambda^k}}(y) = \lambda^{-2k}v_\phi(\lambda^k y).
$$
Now $w_k$ solves an equation with the same ellipticity constants (but rescaled kernels), namely 
$$
   \inf_{\alpha \in \mathcal{A}} \sup_{\beta \in \mathcal{B}}\int_{\mathbb{R}^n}     \delta(w_k,x,y)  \frac{b_{\alpha \beta}(\lambda^k x,\lambda^ky)}{v_{\phi_{\lambda^k}}(y)^{\frac{n+\sigma}{2}}}dy = \lambda^{k(\sigma - 1 - \alpha_0)}f(\lambda^k x)
$$
for $x \in \lambda^{-k} S^\phi_{2 R} = S^{\phi_{\lambda^k}}_{2 R\lambda^{-k}}$. Since, due to \eqref{MAASSUMP2},
$$\sigma - 1 - \alpha_0 \geq \sigma_0 - 1 - \alpha_0 > \alpha_1 -\alpha_0 > 0$$ 
and $\lambda < 1$, we have 
$$\left\| \lambda^{k(\sigma - 1 - \alpha_0)}f(\lambda^k x) \right\|_{L^\infty(\lambda^{-k}S^\phi_{2 R})} \leq \| f \|_{L^\infty \left( S^\phi_{2 R} \right) } \leq \eta.$$
Therefore, $w_k$ solves
\begin{equation}\label{EQUATIONFORWK}
   -\eta <  \inf_{\alpha \in \mathcal{A}} \sup_{\beta \in \mathcal{B}}\int_{\mathbb{R}^n}     \delta(w_k,x,y)  \frac{b_{\alpha \beta}(\lambda^k x,\lambda^ky)}{v_{\phi_{\lambda^k}}(y)^{\frac{n+\sigma}{2}}}dy < \eta
\end{equation}
for $x \in \lambda^{-k} S^\phi_{2 R} = S^{\phi_{\lambda^k}}_{2 R\lambda^{-k}}$.

Let $\phi_0$ be the function from Proposition \ref{LOCALLYELLIP} and take $R$ large such that $S_1^{\phi_0} \subset S^\phi_{2 R}$. Then, since $\phi_0$ is 2-homogeneous,
$$
   S_{2 R \lambda^{-k}}^{\phi_{\lambda^k}} = \lambda^{-k}S^\phi_{2 R} \supset \lambda^{-k}S_1^{\phi_0} = S^{\phi_0}_{\lambda^{-k}} \supset S_1^{\phi_0},
$$
so that $(\ref{EQUATIONFORWK})$ holds for $S_1^{\phi_0}$ for every $k \in \mathbb{N}$. From Corollary \ref{SCALEDHOLDER}, $w_k$ is H\"older continuous in $S^{\phi_{\lambda^k}}_{R\overline{\rho}}$. Notice that, by Lemma \ref{PREM1} with $\Psi_1 = \phi_{\lambda^k}, \Psi_2 = |.|^2$ and radius $R_0$, we obtain for large $R$
$$
   B_{R_0} \subset S^{\phi_{\lambda^k}}_{R\overline{\rho}},
$$
and so $w_k$ is H\"older continuous in $B_{R_0}$, where $R_0$ is from Lemma $\ref{RA4}$. We can now apply Lemma \ref{RA4} to the function $w_k$, 
finding $h:\mathbb{R}^n \rightarrow \mathbb{R}$ satisfying $h \in \mathcal{C}(S_1^{\phi_0})$, $|h| \leq (1 + \zeta)$ in $\mathbb{R}^n$,
$$\int_{\mathbb{R}^n}\delta(h,x,y)\frac{b(y)}{v_{\phi_0}(y)^{\frac{n+\sigma}{2}}}dy = 0,$$
in the viscosity sense in $S_1^{\phi_0}$, and $|w_k - h| < \epsilon$ in $S_1^{\phi_0}$, for some small $\epsilon$ to be chosen later. By Theorem \ref{GRADIENTESTIMATESFORSOLUTIONSWITHGROWTH}, we obtain
$$
   \left\|h\right\|_{\mathcal{C}^{1,\alpha_*}(S^{\phi_0}_{1/2})} \leq C_2.
$$

Letting $\bar{l}(x) = h(0) + \nabla h(0)\cdot x$, we have, by the mean value theorem,
$$
   \begin{array}{lll}
      |h(x) - \bar{l}(x)|  & \leq &\displaystyle [\nabla h]_{C^{\alpha_\ast}(S_{1/2}^{\phi_0})}|x|^{1 + \alpha_\ast}\\
      & & \\
        &\leq & \displaystyle C_2|x|^{1 + \alpha_\ast}
   \end{array}
$$
for $x \in B_{\theta/2}$. 

Since $|w_k| \leq 1$ in $B_{\theta}$, we have $|h| \leq 1+\epsilon$ in $B_{\theta}$ and then $|h(0)| \leq 1+\epsilon $. Also, by $C^{1,\alpha_\ast}$ estimates, we have $|\nabla h(0)| \leq C_2$. Therefore, we have the following estimates
$$
\begin{array}{lll}
|w_k - \bar{l}| & \leq & |w_k - h| + |h - \bar{l}|\\
& & \\
& \leq & \epsilon(\eta,R) + C_2|x|^{1 + \alpha_\ast} \quad \mbox{in} \quad B_{\frac{\theta}{2}}\\
\\
|w_k - \bar{l}| & \leq & |w_k - h| + |h| + |\bar{l}|\\
& & \\
& \leq & 3 \epsilon(\eta,R) + 2 + C_2|x| \quad \mbox{in} \quad B_\theta \backslash B_{\frac{\theta}{2}}\\
\\
|w_k - \bar{l}| & \leq & |w_k| + |\bar{l}|\\
& & \\
& \leq & |x|^{1+\alpha_1} + |h(0)| +|\nabla h(0)| \; |x| \\
& & \\
& \leq & |x|^{1+\alpha_1} + 1 + \epsilon(\eta,R) + C_2|x| \quad \mbox{in} \quad \mathbb{R}^n \backslash B_\theta.
\end{array}
$$

Let us now fix $\lambda>0$, to be chosen later, such that $\epsilon \leq \lambda^{1 + \alpha_\ast}$. Define
$$
l_{k+1}(x) := l_k(x) + \lambda^{k(1+\alpha_0)}\bar{l}(\lambda^{-k}x),
$$
and
$$
\begin{array}{lll}
w_{k+1}(x) & = &\displaystyle  \frac{1}{\lambda^{(k+1)(1+\alpha_0)}}[u - l_{k+1}](\lambda^{k+1}x)\\
& & \\
& = & \displaystyle  \frac{1}{\lambda^{1+\alpha_0}}[w_k - \bar{l}](\lambda x).
\end{array}
$$
We have the following estimates for the scaling above:

$$
\begin{array}{lll}
|w_{k+1}(x)| & \leq & \lambda^{-1 - \alpha_0}(\epsilon + C_2\lambda^{1+\alpha_\ast}|x|^{1+\alpha_\ast}) \quad \mbox{in} \quad \lambda^{-1}B_{\frac{\theta}{2}}\\
\\
|w_{k+1}(x)| & \leq & \lambda^{-1 -\alpha_0}(3 \epsilon + 2 + C_2\lambda|x|) \quad \mbox{in} \quad \lambda^{-1}B_{\theta}\backslash \lambda^{-1}B_{\frac{\theta}{2}}\\
\\
|w_{k+1}(x)| & \leq & \lambda^{-1 -\alpha_0}(\lambda^{1+\alpha_1}|x|^{1+\alpha_1} + 1 + \epsilon + C_2\lambda| x|) \quad \mbox{in} \quad \mathbb{R}^n \backslash \lambda^{-1}B_{\theta}.
\end{array}
\bigskip
$$
\noindent Using the fact that $\epsilon \leq \lambda^{1 + \alpha_\ast}\leq 1$, we obtain

$$
\begin{array}{lll}
|w_{k+1}(x)| & \leq & \lambda^{\alpha_\ast - \alpha_0}(1 + C_2 |x|^{1+\alpha_\ast})  \quad \mbox{in} \quad \lambda^{-1}B_{\frac{\theta}{2}}\\
\\
|w_{k+1}(x)| & \leq & 5 \lambda^{-1 - \alpha_0}  + C_2\lambda^{-\alpha_0}|x| \quad \mbox{in} \quad \lambda^{-1}B_{\theta}\backslash \lambda^{-1}B_{\frac{\theta}{2}}\\
\\
|w_{k+1}(x)| & \leq & \lambda^{\alpha_1 -\alpha_0}|x|^{1+\alpha_1} + 2\lambda^{-1 -\alpha_0} + C_2\lambda^{-\alpha_0}| x| \quad \mbox{in} \quad \mathbb{R}^n \backslash \lambda^{-1}B_{\theta}.
\end{array}
$$

\medskip

\noindent Notice now that for $\lambda \leq 1/2$, we obtain
$$
   B_{\lambda^{-1}\theta/2} \supset B_{\theta}.
$$
Recalling that $\alpha_0 < \min\{\alpha_1,\alpha_* \}$, we have, by the first estimate above,
$$
\begin{array}{lll}
|w_{k+1}(x)| & \leq & \lambda^{\alpha_* - \alpha_0}(1 + C_2|x|^{1+\alpha_*})\\
& \leq & \lambda^{\alpha_* - \alpha_0}(1 + C_2)\\
& \leq & 1
\end{array}
$$
in $B_\theta$, as long as $\lambda$ is chosen such that
$$
   \lambda \leq \left(\frac{1}{1 + C_2} \right)^{\frac{1}{\alpha_* - \alpha_0}}.
$$

We will choose $\lambda$ so small that $|w_{k+1} (x)| \leq |x|^{1+\alpha_1}$ outside $B_\theta$ as well. A point $x \in \mathbb{R}^n \backslash B_\theta$ must be in one of the sets
$$
   B_{\lambda^{-1}\theta/2},\quad  B_{\lambda^{-1}\theta} \backslash  B_{\lambda^{-1}\theta/2} \quad \mbox{or} \quad \mathbb{R}^n \backslash  B_{\lambda^{-1}\theta}. 
$$
If, for instance, $x \in B_{\lambda^{-1}\theta/2}$, we have $\theta \leq |x| < \lambda^{-1}\theta/2$, and so
$$
\begin{array}{lll}
|w_{k+1}(x)| & \leq & \lambda^{\alpha_* - \alpha_0}(1 + C_2|x|^{1+\alpha_*})\\
& \leq & \lambda^{\alpha_* - \alpha_0}(\theta^{-1-\alpha_1} + C_2|x|^{\alpha_* - \alpha_1})|x|^{1+\alpha_1}\\
& \leq & 3\theta^{-1-\alpha_1}\lambda^{\alpha_1 - \alpha_0}(1+C_2)|x|^{1+\alpha_1}\\
& \leq & \lambda^{\alpha_* - \alpha_0}\theta^{-1-\alpha_1}|x|^{1+\alpha_1} + C_2\lambda^{\alpha_* - \alpha_0}|x|^{\alpha_* - \alpha_1}|x|^{1+\alpha_1}\\
& \leq & \left( \lambda^{\alpha_* - \alpha_0}\theta^{-1-\alpha_1} + C_2\theta^{\alpha_* - \alpha_1} . \right. \\
& & \left. .  \max\left\{\left(\frac{1}{2}\right)^{\alpha_* - \alpha_1}\lambda^{\alpha_1 - \alpha_0},\lambda^{\alpha_* - \alpha_0} \right\}  \right) |x|^{1+\alpha_1} \\
& \leq & |x|^{1+\alpha_1},
\end{array}
$$
as long as $\lambda$ is chosen small such that
$$
    \left(\lambda^{\alpha_* - \alpha_0}\theta^{-1-\alpha_1} + C_2\theta^{\alpha_* - \alpha_1}\max\left\{\left(\frac{1}{2}\right)^{\alpha_* - \alpha_1}\lambda^{\alpha_1 - \alpha_0},\lambda^{\alpha_* - \alpha_0} \right\}  \right) \leq 1,
$$
recall that both $\alpha_* - \alpha_0$ and $\alpha_1 - \alpha_0$ are positive and that $\alpha_* - \alpha_1$ may be positive or negative.

Now, if $x \in B_{\lambda^{-1}\theta} \backslash  B_{\lambda^{-1}\theta/2}$, we have $\lambda^{-1}\theta/2\leq |x|< \lambda^{-1}\theta $. By the second estimate, we obtain
$$
\begin{array}{lll}
|w_{k+1}(x)| & \leq & 5\lambda^{-1 -\alpha_0} + C_2\lambda^{-\alpha_0}|x| \\
& = & 5\lambda^{\alpha_1 - \alpha_0}\lambda^{-1 - \alpha_1} + C_2\lambda^{\alpha_1 - \alpha_0}\lambda^{-\alpha_1}|x|\\
& \leq & 2 \lambda^{\alpha_1 - \alpha_0} \max\{5,C_2\}\left(\frac{\theta}{2} \right)^{-1-\alpha_1}|x|^{1+\alpha_1}\\
& \leq &|x|^{1+\alpha_1},
\end{array}
$$
as long as we choose $\lambda$ so small that
$$
    2 \lambda^{\alpha_1 - \alpha_0} \max\{5,C_2\}\left(\frac{\theta}{2} \right)^{-1-\alpha_1} \leq 1.
$$
Finally, if $x \in \mathbb{R}^n \backslash  B_{\lambda^{-1}\theta}$, we have $|x| \geq \lambda^{-1}\theta$. By the third estimate, we get
$$
\begin{array}{lll}
|w_{k+1}(x)| & \leq & \lambda^{\alpha_1 - \alpha_0}|x|^{1+\alpha_1} + 2\lambda^{-1 -\alpha_0} +  C_2\lambda^{-\alpha_0}|x|\\
&\leq & \lambda^{\alpha_1 - \alpha_0}\left(1 + 2\theta^{-1-\alpha_1} + C_2\theta^{-\alpha_1} \right)   |x|^{1+\alpha_1}\\
& \leq & |x|^{1+\alpha_1},
\end{array}
$$
as long as we take $\lambda$ small such that
$$
  \lambda^{\alpha_1 - \alpha_0}\left(1 + 2\theta^{-1-\alpha_1} + C_2\theta^{-\alpha_1} \right) \leq 1. 
$$
We choose $\lambda$ such that all of the above conditions hold and we get 
$$|w_{k+1}(x)| \leq |x|^{1+\alpha_1}, \qquad x \in \mathbb{R}^n \backslash B_\theta, $$
and so $|w_{k+1}| \leq (1 + \zeta)$ in $\mathbb{R}^n$ as desired. 

Notice as well that, for $x \in B_{\lambda^{k+1}\theta}$ and $|w_{k+1}| \leq 1$ in $B_\theta$, we have
    $$
\begin{array}{lll}
|u(x) - l_{k+1}(x)| & = & \lambda^{(k+1)(1+\alpha_0)}|w_{k+1}(\lambda^{-(k+1)}x)|\\
& \leq & \lambda^{(k+1)(1 + \alpha_0)}.
\end{array}
$$
By the definition of $l_{k+1}$, we have $a_{k+1} = a_k + \lambda^{k(1+\alpha_0)}h(0)$ and $b_{k+1} = b_k + \lambda^{k\alpha_0}\nabla h(0)$ and so the theorem is proven.
\end{proof}

Now, the full regularity estimate of Theorem \ref{MAINTHM} follows as in \cite{caffarelli2011regularity}, using a covering argument.

\bigskip

{\small \noindent{\bf Acknowledgments.} D.P. partially supported by Capes-Fapitec and CNPq. A.S. supported by a Capes PhD scholarship. J.M.U. partially supported by the King Abdullah University of Science and Technology (KAUST), by Funda\c c\~ao para a Ci\^encia e a Tecnologia, through project PTDC/MAT-PUR/28686/2017 and by the Centre for Mathematics of the University of Coimbra (UIDB/00324/2020, funded by the Portuguese Government through FCT/MCTES).}

\bigskip

\end{document}